\theoremstyle{plain}
\newtheorem{theorem}{Theorem}[section]
\newtheorem*{theorem*}{Theorem}
\newtheorem{lemma}[theorem]{Lemma}
\newtheorem{facttt}[theorem]{Fact}
\theoremstyle{definition}
\newtheorem*{remark*}{Remark}
\newtheorem{definition}[theorem]{Definition}
\newcommand{\N}{\mathbb{N}}
\newcommand{\Z}{\mathbb{Z}}
\newcommand{\R}{\mathbb{R}}
\newcommand{\paren}[1]{\left( #1 \right)}
\newcommand{\setcond}[2]{\left\{ #1 \;\middle\vert\; #2 \right\}}
\begin{document}

\setstretch{1.27}

\title{On exponential Freiman dimension}

\author{Jeck Lim}
\address{University of Oxford, Radcliffe Observatory, Andrew Wiles Building, Woodstock Rd, Oxford OX2 6GG, United Kingdom}
\email{jeck.lim@maths.ox.ac.uk}

\author{Akshat Mudgal}
\address{Mathematics Institute, Zeeman Building, University of Warwick, Coventry CV4 7AL, UK.}
\email{Akshat.Mudgal@warwick.ac.uk}

\author{Cosmin Pohoata}
\address{Department of Mathematics, Emory University, Atlanta, GA, 30322, USA}
\email{cosmin.pohoata@emory.edu}

\author{Xuancheng Shao}
\address{Department of Mathematics, University of Kentucky, 715 Patterson Office Tower, Lexington, KY 40506, USA}
\email{xuancheng.shao@uky.edu}


\begin{abstract}
\vspace{+5mm}
 The exponential Freiman dimension of a finite set $A \subset \mathbb{R}^{m}$, introduced by Green and Tao in 2006, represents the largest positive integer $d$ for which $A$ contains the vertices of a non-degenerate $d$-dimensional parallelepiped. 
 
 For every $d \geq 1$, we precisely determine the largest constant $C_{d}>0$ (exponential in $d$) for which 
$$|A+A| \geq C_{d}|A| - O_{d}(1)$$
holds for all sets $A$ with exponential Freiman dimension $d$.
\end{abstract}
\maketitle
\section{Introduction}

A central theme in additive combinatorics suggests that in any abelian group, finite sets $A$ with  small doubling
\[
  \sigma(A) \;:=\; \frac{|A+A|}{|A|}
\]
must exhibit strong additive structure.  Here, $A+ A = \{ a+ a' : a,a' \in A\}$ denotes the sumset of $A$. Classical results of Freiman and their later refinements show that such sets are efficiently contained in
generalized arithmetic progressions or subspaces with bounded dimension; see, for example, Freiman's monograph
\cite{FreimanBook} and the book of Tao and Vu~\cite{TaoVu}.

When the abelian group above has no torsion, these questions are often related to studying the following problem: given a finite set $A \subset \mathbb{R}^m$, what type of geometric conditions must $A$ satisfy so as to ensure that the doubling $\sigma(A)$ grows rapidly, see \cite{Bilu, Chang}.
A basic instance of this is \emph{Freiman's lemma}, which states that if a finite set \(A\subset\R^m\) contains the vertices of a non-degenerate \(d\)-simplex, then
\[
  |A+A| \geq (d+1)|A| - \tfrac12 d(d+1) \geq d|A|/2.
\]
A non-degenerate \(d\)-simplex is a set $P \;=\; \{v_0,\,v_0+v_1,\dots,v_0+v_d\}$
with \(v_1,\dots,v_d\) linearly independent. So the mere presence of a simplex of dimension \(d\) forces the additive doubling $\sigma(A)$ to grow linearly in $d$.

This can be compared with the continuous setting where given a non-empty, compact set $\mathcal{A} \subset \mathbb{R}^d$,  one can apply the Brunn--Minkowski inequality to deduce that $\mu(\mathcal{A} + \mathcal{A}) \geq 2^d \mu(\mathcal{A})$, with $\mu$ denoting the Lebesgue measure in $\mathbb{R}^d$.  Thus it is natural to ask what local conditions can be prescribed for a finite set $A \subseteq \mathbb{R}^d$ to ensure that $\sigma(A)$ grows exponentially in $d$. 

In this direction, Green and Tao \cite{GTcompressions} introduced the notion of \emph{exponential Freiman dimension}. Thus, given a finite, non-empty set $A \subset \mathbb{R}^m$, we define its exponential Freiman dimension  to be the the largest positive integer $d$ for which $A$ contains the vertices of a non-degenerate $d$-dimensional parallelepiped
\[
  P \;=\; v_0 + \{0,1\} \cdot v_1 + \cdots + \{0,1\} \cdot v_d
\]
with \(v_1,\dots,v_d\) linearly independent. Green and Tao \cite{GTcompressions} proved that if $A \subset \mathbb{R}^m$ has exponential Freiman dimension $d$, then 
\begin{equation} \label{sqrt2}
  |A+A| \geq (\sqrt{2})^{d} \,|A|.
  \end{equation}
Such inequalities, while being of independent interest, have also had some exciting recent applications to sum-product theory, see work of P\'{a}lv\"{o}lgyi--Zhelezov ~ \cite{PZ2020}. This circle of ideas has been significantly generalized in \cite{EntropyPFR}, the latter leading to the breakthrough work of Gowers--Green--Manners--Tao \cite{PFR} on the  polynomial Freiman--Ruzsa conjecture over $\mathbb{F}_2^n$, and subsequently yielding previously inaccessible sum-product estimates over $\mathbb{R}$.

In \cite{GTcompressions}, Green and Tao noted that the inequality \eqref{sqrt2} is somewhat close to sharp: for example, if $A = \{0,1\}^d\subset\R^d$ one has $|A+A| = 3^d = (3/2)^d|A|$. More specifically, given a positive integer $d$, if we denote by $C_d>0$ the largest constant such that
\begin{equation}\label{eq:Cd-def}
  |A+A| \;\ge\; C_d\,|A| - O_d(1)
\end{equation}
holds for all finite sets $A \subset \mathbb{R}^{m}$ with exponential Freiman dimension $d$, then Green and Tao established that 
$$(\sqrt{2})^{d} \leq C_{d} \leq 2(3/2)^{d-1},$$
with the upper bound following from considering the set $A = \{0,1\}^{d-1} \times \{0,1,\dots, N\}$.

Our main result closes the above exponential gap by determining the exact value of \(C_d\) for every $d \geq 1$.  
\begin{theorem}\label{thm:Cd-main}
For every $d \geq 1$ and every finite set $A\subset \R^m$ with exponential Freiman dimension $d$, we have 
\begin{equation} \label{eq:main-tight}
  |A+A|\geq C_d(|A|-2^d)+3^d,
\end{equation}
where
\[
  C_d \;=\;
  \begin{cases}
    2\bigl(\tfrac32\bigr)^{d-1}, & d \le 5,\\[4pt]
    2^{d/2+1} - 1, & d \ge 6 \text{ even},\\[4pt]
    3\cdot 2^{(d-1)/2} - \tfrac32, & d \ge 7 \text{ odd}.
  \end{cases}
\]
\end{theorem}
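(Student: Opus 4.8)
\emph{Strategy.} I would treat the lower bound \eqref{eq:main-tight} as the heart of the matter and prove it by induction on $d$, then separately exhibit configurations realising the stated $C_d$ so that it cannot be improved. The base case $d=1$ is the elementary bound $|A+A|\ge 2|A|-1=C_1(|A|-2)+3$.

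\emph{The inductive step.} Since an invertible linear map of $\R^m$ preserves both $|A|$ and $|A+A|$, I would first normalise, via such a map and a translation, so that the witnessing parallelepiped becomes the standard cube, i.e. $\{0,1\}^d\times\{0\}^{m-d}\subseteq A$; write $\{0,1\}^d$ for this cube. Let $\pi\colon\R^m\to\R^{m-1}$ forget the $d$-th coordinate, put $B=\pi(A)$, and for $b\in B$ let $A_b\subset\R$ be the fibre over $b$, so $|A|=\sum_{b\in B}|A_b|$. Then $B\supseteq\{0,1\}^{d-1}$, so $B$ has exponential Freiman dimension at least $d-1$ and the inductive hypothesis gives $|B+B|\ge C_{d-1}(|B|-2^{d-1})+3^{d-1}$. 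The crux is to combine this with a fibrewise analysis of $|A+A|=\sum_{c\in B+B}|(A+A)_c|$: since $(A+A)_{b+b'}\supseteq A_b+A_{b'}$ and $|A_b+A_{b'}|\ge|A_b|+|A_{b'}|-1$ for non-empty real fibres, one extracts lower bounds that are strong when the fibres are long and weak when they are short, and the trade-off between these and the gain of $|B+B|$ over $|B|$ is what pins down the recursion. I anticipate that this requires either a preliminary compression step in the spirit of Green--Tao \cite{GTcompressions} — reducing to the case where each $A_b$ is an interval, so that the fibre inequalities become equalities, once one checks that the relevant compressions retain exponential Freiman dimension at least $d$ — or a secondary induction on $|A|$ that peels off points sitting in short fibres.

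\emph{Two regimes and the threshold $d=6$.} The analysis above produces two competing estimates. Keeping one ``long'' coordinate direction and treating the remaining ones as a $(d-1)$-dimensional cube simply propagates the factor $\tfrac32$ and yields \eqref{eq:main-tight} with constant $2(\tfrac32)^{d-1}$; on the other hand, slicing off \emph{two} coordinates at once and exploiting that the $(d-2)$-dimensional projection already carries a $(d-2)$-parallelepiped together with thick fibres spread over a genuine two-dimensional ``plate'' roughly doubles the constant while lowering the dimension by $2$ — this propagates $\sqrt2$ per coordinate but with a strictly better additive term, and is the source of $2^{d/2+1}-1$ for even $d$ and of $3\cdot 2^{(d-1)/2}-\tfrac32$ for odd $d$ (the odd case being the even case composed with one further $\tfrac32$-step). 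Comparing $2(\tfrac32)^{d-1}$ against these two expressions, the latter become the smaller — hence the ones that actually have to be proved — precisely from $d=6$ onwards, which is exactly where the piecewise definition of $C_d$ switches. Getting the additive constants and the even/odd dichotomy exactly right, and verifying that no third reduction beats both, is the step I expect to be the main obstacle; it is where the bound \eqref{sqrt2} of Green--Tao is genuinely improved by a factor that is itself exponential in $d$.

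\emph{Optimality.} For $d\le 5$ the set $A=\{0,1\}^{d-1}\times\{0,1,\dots,N\}$ satisfies $|A+A|=2(\tfrac32)^{d-1}(|A|-2^d)+3^d$ on the nose, so $C_d$ cannot be increased there. For $d\ge 6$ a product construction $\{0,1\}^{a}\times(\text{$b$-dimensional extremiser})$ always reproduces the larger value $2(\tfrac32)^{d-1}$, so one needs genuinely $d$-dimensional, non-product examples; I would build these recursively, gluing several affine copies of a lower-dimensional extremiser along a low-dimensional pattern whose translates are chosen so that the pairwise sumsets overlap maximally, thereby lowering the asymptotic sumset growth by the required factor while keeping the exponential Freiman dimension exactly $d$, and then check that they realise the stated $C_d$ in the limit $N\to\infty$.
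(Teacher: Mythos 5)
Your proposal outlines a dimension-by-dimension induction (project away one coordinate, analyze 1D fibers, invoke the inductive hypothesis on $|B+B|$), which is a genuinely different strategy from the paper's, but as sketched it has two serious gaps.

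First, the additive fiber bound $|A_b+A_{b'}|\ge|A_b|+|A_{b'}|-1$ applied coordinate-by-coordinate is strictly weaker than what the argument ultimately needs. The paper's key technical input is the \emph{discrete Brunn--Minkowski inequality} applied simultaneously in $d-k$ dimensions (Lemma~\ref{lem:discbm}): after compressing $A$ to a down-set in $\N_0^d$, it partitions both $A$ and $B=A+A$ into blocks $A_S$, $B_S$ according to which coordinates exceed the threshold $2$ (resp.\ $3$), and for $S=[k]$ it applies Brunn--Minkowski to the full $(d-k)$-dimensional fibers $A_S(u),A_S(v)\subset\Z^{d-k}$ via $|B_S(u+v)|^{1/(d-k)}\ge|A_S(u)|^{1/(d-k)}+|A_S(v)|^{1/(d-k)}$ (Lemma~\ref{lem:fibersum}). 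Replacing this by a $1$-dimensional Cauchy--Davenport bound per coordinate throws away the geometric information that makes the constants exponential in $d$. Your ``compress so each $A_b$ is an interval'' remark does not recover this: the intervals interact multiplicatively across coordinates, and a single-coordinate projection forgets that structure.

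Second, the ``two regimes'' heuristic does not produce the claimed constants, and your proposal contains no replacement for the paper's real technical core: the purely analytic inequality of Theorem~\ref{lem:mainineq} for families $\{x_u\}_{u\in\{0,1\}^k}$, $\{y_w\}_{w\in\{0,1,2\}^k}$ subject to $y_{u+v}\ge x_u+x_v$ and a monotonicity condition. Proving that inequality occupies \S3--\S5 of the paper and requires an averaging inequality, a ``strong inequality'' of the form $\sum y_u^{d-k}\ge\sum_s D_s X_s$ with $D_s=2^{k-s}+(2^{d-k}-1)2^s$, a redistribution lemma $X_s\ge\frac{s+1}{k-s}X_{s+1}$, and delicate case analysis with several ad hoc modifications of the strong inequality to handle finitely many stubborn $(k,m)$ pairs. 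None of this is captured by a two-step peeling heuristic. Concretely, the recursion $C_d=2C_{d-2}+1$ that your ``slice off two coordinates'' step would produce holds for $d\ge 8$ but \emph{fails} at the transition: $2C_4+1=\tfrac{29}{2}\ne 15=C_6$. Moreover, the extremizers~\eqref{eq:example-0.2}, \eqref{eq:example-0.3} do not project to lower-dimensional extremizers under coordinate projection (projecting the even-$d$ example to $d-1$ dimensions gives a set different in shape from the odd-$(d-1)$ example), so the induction would not even ``close'' on the tight configurations. To rescue this line of attack you would need something playing the role of the block decomposition and the analytic inequality, which is precisely what is missing.

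Your optimality section is fine in outline; the example for $d\le 5$ is exactly the paper's, and the non-product ``glued'' constructions for $d\ge 6$ match the spirit of~\eqref{eq:example-0.2} and~\eqref{eq:example-0.3}, though you would of course need to write them down explicitly and compute $|A+A|/|A|$.
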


A perhaps surprising aspect of Theorem~\ref{thm:Cd-main} is  that the natural upper bound example $A = \{0,1\}^{d-1} \times \{0,1,2,\dots, N\}$ which satisfies $|A+A| = 2 (3/2)^{d-1}|A| + O_d(1)$ is only optimal when $d \leq 5$. When $d >5$, there is a significantly different family of examples that extremize the doubling $\sigma(A)$.
Writing \([0,N] := \{0,1,\dots,N\}\)
for any \(N\in\N\), we describe these examples below.
\begin{align}
  A &= \{0,1\}^{d-1} \times [0,N], \label{eq:example-0.1}\\
  A &= \{0,1\}^d \,\cup\, \bigl(\{0\}^{d/2} \times [0,N]^{d/2}\bigr)
      &&\text{for \(d\) even}, \label{eq:example-0.2}\\
  A &= \{0,1\}^d \,\cup\, \bigl(\{0\}^{(d-1)/2} \times \{0,1\}\times [0,N]^{(d-1)/2}\bigr)
      &&\text{for \(d\) odd}. \label{eq:example-0.3}
\end{align}
Example~\eqref{eq:example-0.1} satisfies $|A| = 2^{d-1}(N+1)$, and $A+A = \{0,1,2\}^{\,d-1}\times[0,2N]$, so 
$$|A+A| = 3^{\,d-1}(2N+1)= 2(3/2)^{d-1}(|A|-2^d)+3^d.$$
Example~\eqref{eq:example-0.2} satisfies $|A| = (N+1)^{d/2} + 2^d - 2^{d/2}$ and 
\[ A+ A = \{0,1,2\}^d \cup (\{0,1\}^{d/2} \times [0,N+1]^{d/2}) \cup (\{0\}^{d/2} \times [0, 2N]^{d/2}),  \]
whence, 
\[ \frac{|A+A|}{|A|} = \frac{2^{d/2}(N+2)^{d/2} + (2N+1)^{d/2} - (N+2)^{d/2} + O_d(1)}{(N+1)^{d/2} + O_d(1)} \rightarrow 2 \cdot 2^{d/2}-1 \]
as $N\rightarrow\infty$.
Finally, Example~\eqref{eq:example-0.3} satisfies  $|A| = 2(N+1)^{(d-1)/2} + 2^d - 2^{(d+1)/2}$ and
\begin{align*}
    A+A &= \{0,1,2\}^d \cup (\{0,1\}^{(d-1)/2}\times \{0,1,2\}\times [0,N+1]^{(d-1)/2}) \\
    & \quad \cup (\{0\}^{(d-1)/2}\times \{0,1,2\}\times [0,2N]^{(d-1)/2}),
\end{align*}
and so, we get that 
\begin{align*}
    \frac{|A+A|}{|A|} &= \frac{3\cdot 2^{(d-1)/2}(N+2)^{(d-1)/2} + 3\cdot (2N+1)^{(d-1)/2} - 3\cdot (N+2)^{(d-1)/2} + O_d(1)}{2(N+1)^{(d-1)/2} + O_d(1)}\\
    &\rightarrow 3\cdot 2^{(d-1)/2} - \tfrac32
\end{align*}
as $N\rightarrow\infty$.

Taking the minimum of these three upper bounds for each \(d\) recovers exactly
the values in Theorem~\ref{thm:Cd-main}. Furthermore, \eqref{eq:main-tight} is exactly tight for $d\leq 5$. Interestingly, for sufficiently large $d$, since $C_d<(3/2)^d$, Theorem \ref{thm:Cd-main} yields the bound
\[ |A+A|\geq C_d|A|+(3^d-2^dC_d), \]
wherein the error term is actually positive.

The main difficulty will be to show that $C_{d}$ is at least the indicated values above. Before we move on to the proof of Theorem \ref{thm:Cd-main} in \S2, let us start by discussing some of the high-level ideas behind it. 

\medskip

{\bf{Proof ideas}}. By applying a suitable linear transformation and translation we may assume
that $P=\left\{0,1\right\}^{d} \subseteq A \subset \R^{d}$. By further applying translations
and coordinate-wise compressions, we may also assume that $A$ is a \emph{down-set} in $\N_0^d$, that is, for any \(u\in A\) and any $v \in \mathbb{Z}^d$ satisfying the inequality \(0\le v\le u\) coordinate-wise, we have \(v\in A\).  In particular, the
assumption \(\{0,1\}^d \subseteq A\) says that the ``bottom layer'' of \(A\) is a
complete discrete hypercube.  We then partition \(A\) according to which
coordinates lie below or above a fixed threshold, and obtain a parallel
partition of the sumset \(B := A+A\).  We will then employ a discrete Brunn--Minkowski type inequality, due to Green and Tao \cite{GTcompressions}, to get lower bounds on the sizes of the blocks of \(B\)
in terms of the blocks of \(A\).

After a suitable renormalisation, this reduces Theorem~\ref{thm:Cd-main} to a
purely analytic inequality for families of non-negative real numbers
\(\{x_u\}_{u\in\{0,1\}^k}\) and \(\{y_w\}_{w\in\{0,1,2\}^k}\), indexed by slices
of the discrete (Hamming) hypercube and hypergrid, subject to two constraints:
a collection of inequalities of the form \(y_{u+v}\ge x_u+x_v\) arising from the sumset structure, and a monotonicity condition reflecting the downset
structure. The monotonicity condition is not strictly necessary, but it is convenient to have. In order to state this, we will require one further piece of notation, and so, given vectors $u=(u_1, \dots, u_d)$ and $v= (v_1, \dots, v_d)$ in $\mathbb{R}^d$, we say that $v\succeq u$ if $v_i \geq u_i$ for every $1 \leq i \leq d$. 

\begin{theorem} \label{lem:mainineq}
    Let $0\leq k<d$ be integers. Let $\{x_u: u\in \{0,1\}^k\}$ and $\{y_u: u\in \{0,1,2\}^k\}$ be sets of non-negative real numbers such that $y_{u+v}\geq x_u+x_v$ for each $u,v\in \{0,1\}^k$. Assume further that $x_u\geq x_v$ if $v\succeq u$. Then we have
    \begin{equation} \label{anineq}
        \sum_{u\in \{0,1,2\}^k} y_u^{d-k}\geq C_d \sum_{u\in \{0,1\}^k} x_u^{d-k},
    \end{equation}
    where $C_d$ is as in Theorem \ref{thm:Cd-main}, namely
    \[
  C_d \;=\;
  \begin{cases}
    2\bigl(\tfrac32\bigr)^{d-1}, & d \le 5,\\[4pt]
    2^{d/2+1} - 1, & d \ge 6 \text{ even},\\[4pt]
    3\cdot 2^{(d-1)/2} - \tfrac32, & d \ge 7 \text{ odd}.
  \end{cases}
\]
\end{theorem}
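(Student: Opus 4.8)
The plan is as follows. Write $n := d-k\ge 1$. Since we want a lower bound on $\sum_w y_w^{n}$ we may and do take $y_w=\max_{u+v=w}(x_u+x_v)$, the smallest choice compatible with the hypotheses, and normalise $\sum_{u\in\{0,1\}^k}x_u^{n}=1$; monotonicity gives $x_0=\max_u x_u$. For $w\in\{0,1,2\}^k$ put $D(w)=\{i:w_i=2\}$ and $T(w)=\{i:w_i=1\}$, so these are disjoint, a representation $u+v=w$ is a choice of $S\subseteq T(w)$ (placing $D(w)\cup S$ in $u$ and $D(w)\cup(T(w)\setminus S)$ in $v$), and there are $r(w)=2^{|T(w)|}$ of them. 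The argument will combine two lower bounds for $\sum_w y_w^{n}$.

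\emph{Averaging bound.} Since $y_w^{n}\ge (x_u+x_v)^{n}$ for every representation, averaging over the $r(w)$ of them and summing over $w$ yields
\[
\sum_{w\in\{0,1,2\}^k}y_w^{n}\ \ge\ \sum_{u,v\in\{0,1\}^k}\frac{(x_u+x_v)^{n}}{2^{|\{i:\,u_i\ne v_i\}|}}.
\]
Keeping the full power $(x_u+x_v)^{n}$ on balanced pairs makes this very strong when $x$ is spread out (e.g. it alone beats $C_d\cdot 1$ by a wide margin when $x$ is the indicator of a middle layer of the cube), and when $n=1$ it collapses, via $\sum_v 2^{-|\{i:u_i\ne v_i\}|}=(3/2)^k$, to $\sum_w y_w\ge 2(3/2)^{k}\sum_u x_u\ge C_{k+1}\sum_u x_u$, which settles that case for every $k$.

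\emph{Unbalanced bound.} Taking the representation $S=\emptyset$ gives $y_w\ge x_{D(w)}+x_{D(w)\cup T(w)}$, so reindexing $w\leftrightarrow(D,T)$ and grouping by $E:=D\cup T$,
\[
\sum_{w\in\{0,1,2\}^k}y_w^{n}\ \ge\ \sum_{E\subseteq[k]}\ \sum_{D\subseteq E}(x_D+x_E)^{n}.
\]
With $x_D\ge x_E$ for $D\subseteq E$, this controls concentrated configurations: for the extremal $x_0=1$, $x_u=0$ $(u\ne 0)$ it equals exactly $(2^{n}+2^{k}-1)\sum_u x_u^{n}$, and the elementary minimisation $\min_{k+n=d,\,n\ge1}(2^{n}+2^{k}-1)$ stays $\ge C_d$ and recovers the even-$d$ value $2^{d/2+1}-1$ of $C_d$ — this is where the threshold $d=6$ in the formula first appears — so already this bound gives $\ge C_d\sum_u x_u^{n}$ for every $x$ supported near $0$ after scaling.

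\emph{The combination, and the obstacle.} Neither bound suffices alone (the averaging bound is weak for $x\approx\one_{\{0\}}$, the unbalanced one is weak for $x$ spread on a middle layer), so the heart of the proof is to show that for every monotone $x$, one of these two estimates — or a suitable convex combination of them, with the weight chosen according to a concentration parameter such as $x_0^{n}/\sum_u x_u^{n}$ — already exceeds $C_d\sum_u x_u^{n}$. After normalisation this becomes a finite case analysis reducing to elementary inequalities among $2^{n}$, $2^{k}$, $(3/2)^{k}$ and $C_d$ (using $C_d\le\tfrac32 C_{d-1}$ and the three-regime formula). I expect this bookkeeping, and in particular pinning down the crossover regime where the interpolation must be tight at both the concentrated and the spread end simultaneously, to be the main obstacle and the bulk of the work. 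Everything can equivalently be run as an induction on $k$: split off one hypercube coordinate, write the split of $x$ as a pointwise‑dominating pair $a\succeq b$ on $\{0,1\}^{k-1}$ with sumset families $p,q,r$ satisfying $p_{w'}\ge\max_{u'+v'=w'}(a_{u'}+a_{v'})$, $r_{w'}\ge\max_{u'+v'=w'}(b_{u'}+b_{v'})$ and $q_{w'}\ge\max_{u'+v'=w'}(a_{u'}+b_{v'})$, apply the inductive hypothesis with constant $C_{d-1}$ to $(a,p)$ and $(b,r)$, and then use the same concentrated/spread dichotomy to exploit the mixed family $q$ and upgrade $C_{d-1}$ to $C_d$; the core difficulty is unchanged.
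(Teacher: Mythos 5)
Your two lower bounds are genuine and map roughly onto the paper's two main lemmas: your averaging bound is an unexpanded form of the paper's Averaging Inequality (AI), and your unbalanced bound with $S=\emptyset$ is an unexpanded form of the paper's Strong Inequality (SI), which then expands $(x_{w^-}+x_{w^+})^{d-k}\ge x_{w^-}^{d-k}+(2^{d-k}-1)x_{w^+}^{d-k}$ and regroups by $|u|$. You also correctly identify the two extremal configurations and the $n=1$ (i.e.\ $k=d-1$) reduction. But the proposal stops exactly where the real work starts, and the headline plan --- a convex combination of these two bounds tuned by a concentration parameter --- does not close the gap.

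Here is a concrete failure. Take odd $d=2m+1$, $k=m+1$, $n=m$, and the monotone test configuration $x_u=1$ for $|u|\le 1$ and $x_u=0$ otherwise. At $m=3$ (so $d=7$, $k=4$, $n=3$) your averaging bound evaluates to $2^n\cdot\tfrac{k^2+7k+4}{4}=96$ and your unbalanced bound to $2^n(2k+1)+2^k+k2^{k-1}-2k-1=111$, while $C_d\sum_u x_u^n = 22.5\cdot 5 = 112.5$. Any convex combination of $96$ and $111$ is $\le 111 < 112.5$. (The honest bound $y_w=\max_{u+v=w}(x_u+x_v)$ \emph{does} give $153$ here, but that is precisely the information your two summary bounds throw away; the convex-combination plan cannot recover it.) The paper closes this kind of gap with two devices your outline does not contain: a Redistribution inequality $X_s\ge\frac{s+1}{k-s}X_{s+1}$ derived solely from the monotonicity hypothesis (a constraint among the $X_s$, not an alternative lower bound on $\sum_w y_w^{d-k}$), and several surgical modifications of SI (Lemmas \ref{lem:strongineq2}, \ref{lem:strongineq3}, \ref{lem:strongineq4}) that reassign the decomposition $w=u+v$ for a handful of special $w$ of Hamming weight $2$ or $3$, precisely to harvest what the max achieves at the awkward $w$. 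Even with Redistribution, the pairs $(k,m)\in\{(5,3),(6,4),(7,4),(7,5)\}$ in the odd case require these modified inequalities. So the ``bookkeeping'' you defer is not bookkeeping: it needs an extra idea beyond your two bounds, and your inductive reformulation hits the same wall, since the odd extremiser is not concentrated at a single point and the obstruction at $k=m+1$, $s=1$ survives the $C_{d-1}\to C_d$ upgrade.
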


We remark that the constant $C_d$ in Theorem~\ref{thm:Cd-main} is tight for at least one value of $k$ for each $d$. In particular, for $d\leq 5$, tightness occurs at $k=d-1$, with $x_u=1$ for all $u$. For even $d\geq 6$, tightness occurs at $k=d/2$ with $x_0=1$ and $x_u=0$ for all $u\neq 0$. Finally, for odd $d\geq 7$, tightness occurs at $k=(d+1)/2$ with $x_0=x_{e_1}=1$ and $x_u=0$ for all other $u$. These equality cases naturally result in the constructions given in \eqref{eq:example-0.1}, \eqref{eq:example-0.2} and \eqref{eq:example-0.3}.

Theorem \ref{lem:mainineq} is proved by a
combination of averaging arguments, case distinctions in small dimensions, and compression properties that allow us to redistribute weight from parts with excess weight to parts with less weight.  The
dimension-dependent constants that arise along the way match precisely the
values of \(C_d\) asserted in Theorem~\ref{thm:Cd-main}.

\medskip

{\bf{Organization of the paper}}. In \S2 we discuss various prerequisites concerning compressions and prove how  Theorem~\ref{thm:Cd-main} follows from Theorem \ref{lem:mainineq}. We lay some preliminary brickwork for proving our main analytic inequality in \S3, which already turns out to be sufficient to deal with the small-dimensional cases \(d \in \{3,4,5\}\). The case when $d \geq 6$ is even is resolved in \S4, while the case when $d \geq 7$ is odd is analysed in \S5.  The odd case is slightly more involved compared to the even case, since the corresponding doubling constant $C_d$ is larger and the construction \eqref{eq:example-0.3} attaining ``equality'' is more complicated. In \S6, we will discuss a few natural future directions.

\medskip

{\bf{Acknowledgments.}} The authors would like to thank Ben Green for useful discussions. J.L. was supported by ERC Advanced Grant 883810. A.M. was supported by Leverhulme early career fellowship ECF-2025-148. C.P. was supported by NSF grant DMS-2246659. X.S. was supported by NSF grant DMS-2452462.

\medskip

\section{Compressions and Brunn-Minkowski}

By applying a suitable linear transformation and translation we may first assume
that $P=\left\{0,1\right\}^{d} \subset A \subset \R^{m}$. Moreover, upon applying a suitable Freiman homomorphisms from $\R^m \to \R^d$ if necessary, we are able to assume that $m = d$.

\subsection*{Coordinate compressions and down-sets}

Throughout the paper, we write $\N_{0}$ for the set of non-negative integers and we will be considering
$\N_{0}^{d}$ with its standard coordinatewise partial order
\[
x \preceq y \quad\Longleftrightarrow\quad x_i \le y_i \;\text{ for all } 1 \le i \le d.
\]

\begin{definition}[Down-sets]
A subset $X \subset \R^{d}$ is called an \emph{$i$-down-set} (in the $i$-th coordinate) if whenever
$(x_1,\dots,x_d) \in X$, then $x_i\in \N_0$ and for any smaller non-negative integer $y_i \le x_i$, the vector
\[
(x_1,\dots,x_{i-1},y_i,x_{i+1},\dots,x_d)
\]
also lies in $X$. We say that $X$ is a \emph{down-set} if it is an $i$-down-set for every $1 \le i \le d$.
\end{definition}

Thus a down-set is simply a set in $\N_0^d$ that is closed under moving points ``downwards'' in any coordinate.  
Typical examples include the discrete box $[L_1]\times \cdots \times [L_d]$ (where $[L] := \{0,1,\dots,L-1\}$)
and, more generally, all subsets of such a box that are closed under the partial order $\preceq$. One reason down-sets are convenient is that they behave well under Minkowski sums: if $X,Y \subset \R^{d}$ are down-sets, then so is $X+Y$. This makes them a natural class of sets on which to study sumsets. The passage from an arbitrary finite set $A \subset \R^{d}$ to a down-set in $\N_0^d$ is achieved by
\emph{coordinate compressions}. Fix a coordinate $i \in \{1,\dots,d\}$, and for each ``fiber''
\[
H_{\mathbf{u}} := \{x \in \R^{d} : x_j = u_j \text{ for all } j\neq i\},
\]
we consider the intersection $A \cap H_{\mathbf{u}} \subset H_{\mathbf{u}}$, which is just a finite subset of
$\R$ when we identify $H_{\mathbf{u}}$ with the $i$-th axis. The $i$-compression of $A$ is obtained by
moving the points of $A$ in each such fiber as far down as possible, while preserving the size of the
intersection.

\begin{definition}[$i$-compression]
Let $A \subset \R^{d}$ be finite and fix $1 \le i \le d$. For each $\mathbf{u}\in\R^{d}$, set
\[
m(\mathbf{u}) := |A \cap H_{\mathbf{u}}|.
\]
We define the \emph{$i$-compression} $C_i(A)$ of $A$ by
\[
C_i(A)
 := \Bigl\{
(u_1,\dots,u_{i-1},t,u_{i+1},\dots,u_d) : 
t\in \N_0, 0 \le t < m(\mathbf{u}) \text{ for some } \mathbf{u}
\Bigr\},
\]
that is, in each fiber $H_{\mathbf{u}}$, we replace $A \cap H_{\mathbf{u}}$ by the initial segment
$\{0,1,\dots,m(\mathbf{u})-1\}$.
\end{definition}

By construction, $C_i(A)$ has the same cardinality as $A$ and is an $i$-down-set. In particular, if $A$
was already a $j$-down-set for some $j\neq i$, then $C_i(A)$ remains a $j$-down-set. Iterating over
$i=1,\dots,d$ we obtain a genuine down-set:

\begin{facttt}
For any finite $A \subset \R^{d}$, the iterated compression
\[
C(A) := C_d \circ C_{d-1} \circ \cdots \circ C_1(A)
\]
is a down-set with $|C(A)| = |A|$.
\end{facttt}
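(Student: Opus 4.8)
The plan is to verify the two assertions of the Fact separately, namely that $|C(A)| = |A|$ and that $C(A)$ is a down-set, by an induction on the number of compressions applied, using the three structural observations already recorded in the excerpt just before the statement: that each $i$-compression preserves cardinality, that $C_i(A)$ is always an $i$-down-set, and that $C_i$ does not destroy the property of being a $j$-down-set for $j \neq i$.

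First I would dispense with the cardinality claim. By construction, for each fiber $H_{\mathbf u}$ the set $C_i(A) \cap H_{\mathbf u}$ is the initial segment $\{0,1,\dots,m(\mathbf u)-1\}$, which has exactly $m(\mathbf u) = |A \cap H_{\mathbf u}|$ elements. Since the fibers $H_{\mathbf u}$ (as $\mathbf u$ ranges over choices of the coordinates other than $i$) partition $\R^d$, summing over fibers gives $|C_i(A)| = \sum_{\mathbf u} m(\mathbf u) = |A|$. Applying this $d$ times to the successive sets $A$, $C_1(A)$, $C_2C_1(A)$, \dots{} yields $|C(A)| = |A|$ at once.

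Next I would handle the down-set claim by proving, by induction on $j$, the statement $S(j)$: ``the set $A_j := C_j \circ C_{j-1} \circ \cdots \circ C_1(A)$ is an $i$-down-set for every $1 \le i \le j$.'' The base case $S(1)$ is exactly the observation that $C_1(A)$ is a $1$-down-set. For the inductive step, assume $S(j)$ and consider $A_{j+1} = C_{j+1}(A_j)$. On one hand, $A_{j+1}$ is a $(j+1)$-down-set since any $i$-compression output is an $i$-down-set. On the other hand, for each $i \le j$ the set $A_j$ is an $i$-down-set by $S(j)$, and applying the compression $C_{j+1}$ with $j+1 \neq i$ preserves this, so $A_{j+1}$ remains an $i$-down-set; hence $S(j+1)$ holds. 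Taking $j = d$ gives that $C(A) = A_d$ is an $i$-down-set for every $1 \le i \le d$, which is precisely the definition of a down-set.

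The only point that needs a little care — and which I would regard as the main (mild) obstacle — is the claim used in the inductive step that applying $C_{j+1}$ to a set $X$ that is already an $i$-down-set for some $i \neq j+1$ leaves it an $i$-down-set; this is asserted in the text but it is worth spelling out. Fix a fiber $H_{\mathbf u}$ in the $(j+1)$-direction. The compression only permutes, within this fiber, which $(j+1)$-coordinate values are occupied, replacing the occupied set by an initial segment of the same size; it does not alter the number of points lying over any fixed choice of the coordinate in direction $i$. More precisely, fix all coordinates except the $i$-th and the $(j+1)$-th, and look at the resulting two-dimensional ``plane'' of points indexed by $(x_i, x_{j+1})$; the $i$-down-set property of $X$ says each column $\{x_i = t\}$ of this plane, viewed against the $(j+1)$-axis — wait, against the $i$-axis — is a down-set, i.e. the set of occupied $x_i$-values for each fixed $x_{j+1}$ is a down-closed subset of $\N_0$. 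Since $C_{j+1}$ acts on each $(j+1)$-fiber (fixing $x_i$) by taking initial segments and these operations along the $(j+1)$-direction commute, in an appropriate sense, with taking down-sets in the $i$-direction, the $i$-down-set property survives. One clean way to see this without coordinate bookkeeping is to invoke the standard commutation/monotonicity lemma for compressions (as in \cite[§2]{GTcompressions}): $C_{j+1}$ cannot increase the $i$-th coordinate of any point relative to the combinatorial structure, and it maps $i$-down-sets to $i$-down-sets because the initial-segment operation in one coordinate and down-closure in another are compatible. With that lemma in hand the induction closes and the Fact follows.
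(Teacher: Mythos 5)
Your skeleton matches the paper's own (telegraphic) argument exactly: both reduce the Fact to the three observations recorded just before it — that $C_i$ preserves cardinality, that $C_i(A)$ is an $i$-down-set, and that $C_i$ does not destroy the $j$-down-set property for $j\neq i$ — and then finish by a finite induction over $i$. The cardinality computation and the induction scheme are fine as written.

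The soft spot is your attempted justification of the third observation, which the paper simply asserts. The phrases ``commute, in an appropriate sense, with taking down-sets in the $i$-direction'' and ``$C_{j+1}$ cannot increase the $i$-th coordinate of any point'' are gestures rather than arguments — the second is particularly off, since $C_{j+1}$ does not move the $i$-th coordinate at all — and the observation you do state correctly (that $C_{j+1}$ preserves the number of points over each fixed $x_i$) is about the wrong fibers and does not by itself yield the conclusion. The clean one-line reason is a monotonicity of fiber sizes. Fix $j+1\neq i$ and all coordinates other than the $i$-th and $(j+1)$-th, and let $m(t)$ be the number of points of $X$ in the $(j+1)$-fiber over $x_i=t$. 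The $i$-down-set property of $X$ gives an injection (the identity on $x_{j+1}$) from the fiber over $t$ into the fiber over any $t'\le t$, so $m$ is non-increasing in $t$. After $C_{j+1}$, a point $(t,s)$ belongs to $C_{j+1}(X)$ iff $s<m(t)$; hence for each fixed $s$ the set $\{t\in\N_0 : s<m(t)\}$ is an initial segment, which is exactly the statement that $C_{j+1}(X)$ is an $i$-down-set. Substitute this for your appeal to a ``commutation lemma,'' and the induction closes.
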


The key feature of these compressions, first introduced by Kleitman \cite{Kleitman}, and further exploited in the works of Bollobás--Leader \cite{BollobasLeader96}, Gardner-Gronchi \cite{GardnerGronchi01} and Green--Tao \cite{GTcompressions},
is that they tend to \emph{decrease} sumsets.

\begin{facttt}
For any finite $A \subset \R^{d}$ and any $1 \leq i \leq d$, we have
$$|C_i(A) + C_i(A)| \;\le\; |A+A|.$$
In particular, $|C(A)+C(A)| \leq |A+A|$. 
\end{facttt}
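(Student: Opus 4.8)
The plan is to establish the single-coordinate inequality $|C_i(A)+C_i(A)|\le |A+A|$ and then iterate. Fix $i$ and write each point of $\R^d$ as a pair $(\mathbf{w},t)$ with $\mathbf{w}\in\R^{d-1}$ recording the coordinates other than the $i$-th and $t\in\R$ the $i$-th coordinate; for $\mathbf{w}\in\R^{d-1}$ let $A_\mathbf{w}:=\{t\in\R:(\mathbf{w},t)\in A\}$, so that $m(\mathbf{w})=|A_\mathbf{w}|$ and, by definition of the $i$-compression, $C_i(A)_\mathbf{w}=\{0,1,\dots,m(\mathbf{w})-1\}$ is the initial segment of $\N_0$ of length $m(\mathbf{w})$. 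Since only the $i$-th coordinate moves, both sumsets split along the fibers indexed by $\mathbf{z}\in\R^{d-1}$, with
\[
(A+A)_\mathbf{z}=\bigcup_{\mathbf{u}+\mathbf{v}=\mathbf{z}}\bigl(A_\mathbf{u}+A_\mathbf{v}\bigr),\qquad (C_i(A)+C_i(A))_\mathbf{z}=\bigcup_{\mathbf{u}+\mathbf{v}=\mathbf{z}}\bigl(C_i(A)_\mathbf{u}+C_i(A)_\mathbf{v}\bigr),
\]
the unions taken over pairs with both fibers non-empty. It therefore suffices to prove $|(C_i(A)+C_i(A))_\mathbf{z}|\le |(A+A)_\mathbf{z}|$ for each fixed $\mathbf{z}$ and sum over $\mathbf{z}$; note that whenever the left-hand fiber is non-empty so is the corresponding fiber of $A+A$, since compression preserves which fibers of $A$ are non-empty.

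Fix such a $\mathbf{z}$. For the compressed set, each summand $C_i(A)_\mathbf{u}+C_i(A)_\mathbf{v}=\{0,\dots,m(\mathbf{u})-1\}+\{0,\dots,m(\mathbf{v})-1\}=\{0,\dots,m(\mathbf{u})+m(\mathbf{v})-2\}$ is again an initial segment of $\N_0$, so the union over $\mathbf{u}+\mathbf{v}=\mathbf{z}$ collapses to the longest such segment and
\[
\bigl|(C_i(A)+C_i(A))_\mathbf{z}\bigr|=\max_{\mathbf{u}+\mathbf{v}=\mathbf{z}}\bigl(m(\mathbf{u})+m(\mathbf{v})-1\bigr),
\]
the maximum over pairs with $A_\mathbf{u},A_\mathbf{v}\ne\emptyset$. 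Pick $(\mathbf{u}_0,\mathbf{v}_0)$ attaining it. On the original side $(A+A)_\mathbf{z}\supseteq A_{\mathbf{u}_0}+A_{\mathbf{v}_0}$, and for any finite non-empty $S,T\subset\R$ one has the elementary bound $|S+T|\ge |S|+|T|-1$ (listing $S=\{s_1<\cdots<s_a\}$ and $T=\{t_1<\cdots<t_b\}$, the chain $s_1+t_1<\cdots<s_1+t_b<s_2+t_b<\cdots<s_a+t_b$ exhibits $a+b-1$ distinct sums). Hence $|(A+A)_\mathbf{z}|\ge m(\mathbf{u}_0)+m(\mathbf{v}_0)-1=|(C_i(A)+C_i(A))_\mathbf{z}|$. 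Summing over $\mathbf{z}$ yields $|C_i(A)+C_i(A)|\le |A+A|$; and since $C(A)=C_d\circ\cdots\circ C_1(A)$, applying this to the set $C_{j-1}\circ\cdots\circ C_1(A)$ in coordinate $j$ for $j=1,\dots,d$ and chaining the resulting inequalities gives $|C(A)+C(A)|\le|A+A|$.

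The only substantive point is the collapse of a union of initial segments to a single initial segment: on the compressed side the sumset fiber has size exactly $\max_{\mathbf{u}+\mathbf{v}=\mathbf{z}}(m(\mathbf{u})+m(\mathbf{v})-1)$ with no loss from overlaps, whereas on the original side we need only retain one well-chosen summand $A_{\mathbf{u}_0}+A_{\mathbf{v}_0}$ and bound it via $|S+T|\ge|S|+|T|-1$. Everything else is bookkeeping about how the two sumsets decompose along coordinate $i$.
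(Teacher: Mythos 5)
Your argument is correct. The paper does not actually prove this fact; it states it as a known result and cites Kleitman, Bollob\'as--Leader, Gardner--Gronchi, and Green--Tao, so there is no in-paper proof to compare against. Your fiber-wise decomposition is the standard way this compression inequality is established in those references: you observe that on each fiber $\mathbf{z}$ the compressed sumset is an initial segment of $\N_0$ of length $\max_{\mathbf{u}+\mathbf{v}=\mathbf{z}}\bigl(m(\mathbf{u})+m(\mathbf{v})-1\bigr)$, while the original sumset contains $A_{\mathbf{u}_0}+A_{\mathbf{v}_0}$ for the maximizing pair and hence, by the one-dimensional Cauchy--Davenport--type bound $|S+T|\ge |S|+|T|-1$, has at least as many elements. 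The bookkeeping is sound: compression preserves which fibers of $A$ are non-empty, so the index sets of non-empty fibers of $A+A$ and $C_i(A)+C_i(A)$ coincide, and summing the fiber-wise inequality over $\mathbf{z}$ is legitimate. The iteration $C = C_d\circ\cdots\circ C_1$ then follows by chaining. This is a clean, complete proof of the cited fact.
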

When studying \emph{lower} bounds on $|A+A|$, we may thus compress $A$ in each coordinate until $A$ becomes a down-set in $\N_0^d$, and then work this more structured set instead. We shall proceed this way in our setting also. Namely, after applying the appropriate coordinate-wise compressions, let us now assume that our set $A$ is a down-set satisfying $\left\{0,1\right\}^{d} \subset A \subset \N_0^{d}$. 

\subsection*{Block decomposition}

For a subset $S\subseteq [d]$, let $A_S$ be the set of elements $(a_1,\ldots,a_d)\in A$ such that $a_i<2$ if $i\in S$ and $a_i\geq 2$ if $i\notin S$. Then we have a partition $A=\bigcup_{S\subseteq [d]} A_S$. In particular, $A_{[d]}=\{0,1\}^d$. We can further partition $A_S$ into $2^{|S|}$ parts $A_S(v)$ for each $v\in \{0,1\}^S$, where $A_S(v)\subset \Z^{[d]\setminus S}$ consists of the points $(a_i)_{i\in [d]\setminus S}$ if there exists $(a_1,\ldots,a_d)\in A_S$ with $a_i=v_i$ for each $i\in S$.

Let $B=A+A$. For each subset $S\subseteq [d]$, let $B_S$ be the set of elements $(b_1,\ldots,b_d)\in B$ such that $b_i<3$ if $i\in S$ and $b_i\geq 3$ if $i\notin S$. Then we have a partition $B=\bigcup_{S\subseteq [d]} B_S$. In particular, $B_{[d]}=\{0,1,2\}^d$. We can further partition $B_S$ into $3^{|S|}$ parts $B_S(v)$ for each $v\in \{0,1,2\}^S$, where $B_S(v)\subset \Z^{[d]\setminus S}$ consists of the points $(b_i)_{i\in [d]\setminus S}$ if there exists $(b_1,\ldots,b_d)\in B_S$ with $b_i=v_i$ for each $i\in S$.

The aim is to show that for each $S\subsetneq [d]$, we have $|B_S|\geq C_d |A_S|$. Then it follows that
\begin{align*}
    |B| & = \sum_{S\subsetneq [d]}|B_S|+|B_{[d]}|\geq C_d \sum_{S\subsetneq [d]}|A_S|+3^d = C_d(|A|-2^d)+3^d.
\end{align*}

From now on, we fix $S\subsetneq [d]$, which we may assume to be $S=[k]$ without loss of generality, for some $0\leq k<d$.

\begin{lemma} \label{lem:fibersum}
    For any $u,v\in \{0,1\}^k$, we have $B_S(u+v)\supseteq A_S(u)+A_S(v)+\{-1,0\}^{d-k}$. Furthermore, if say $A_S(u)=\emptyset$, then $B_S(u+v)\supseteq A_S(v)+\{1\}^{d-k}$.
\end{lemma}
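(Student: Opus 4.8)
My plan is to argue directly and constructively: for every point asserted to belong to $B_S(u+v)$, I will exhibit it as a sum $\alpha+\beta$ of two elements of $A$, and then check that $\alpha+\beta$ actually lands in the block $B_S$ and has the prescribed coordinates on $S=[k]$. The two structural facts I will lean on are that $A$ is a down-set, so that I may freely lower coordinates of any element of $A$ and stay inside $A$, and that $\{0,1\}^d\subseteq A$, so that every $0/1$ vector is available as a summand.

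For the main inclusion, I would fix $u,v\in\{0,1\}^k$, points $p\in A_S(u)$ and $q\in A_S(v)$, and a vector $\e\in\{-1,0\}^{d-k}$, and aim to place $p+q+\e$ in $B_S(u+v)$. Unwinding the definitions, the vectors $\alpha:=(u_1,\dots,u_k,p_{k+1},\dots,p_d)$ and $\gamma:=(v_1,\dots,v_k,q_{k+1},\dots,q_d)$ lie in $A_S\subseteq A$, with $p_j,q_j\ge 2$ for every $j>k$. I then take $\beta$ to be the vector obtained from $\gamma$ by replacing, for each $j>k$, the entry $q_j$ by $q_j+\e_{j-k}\in\{q_j-1,q_j\}$; since $q_j-1\ge 1\ge 0$ and the first $k$ coordinates are untouched, $\beta\preceq\gamma$ and $\beta\in\N_0^d$, so $\beta\in A$ as $A$ is a down-set. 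Now $\alpha+\beta\in A+A=B$, its $j$-th coordinate is $u_j+v_j\le 2<3$ for $j\le k$ and $p_j+q_j+\e_{j-k}\ge 2+2-1=3$ for $j>k$, hence $\alpha+\beta\in B_S$; reading off its coordinates on $S$ and on $[d]\setminus S$ gives exactly $p+q+\e\in B_S(u+v)$. Letting $p,q,\e$ vary yields $B_S(u+v)\supseteq A_S(u)+A_S(v)+\{-1,0\}^{d-k}$.

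For the ``furthermore'' clause I would keep $\gamma$ as above for a chosen $q\in A_S(v)$, and now simply take $\beta:=(u_1,\dots,u_k,1,\dots,1)$, which lies in $\{0,1\}^d\subseteq A$ because $u\in\{0,1\}^k$. Then $\gamma+\beta\in B$ has $j$-th coordinate $u_j+v_j<3$ for $j\le k$ and $q_j+1\ge 3$ for $j>k$, so $\gamma+\beta\in B_S$, and its coordinates display $q+(1,\dots,1)\in B_S(u+v)$; thus $B_S(u+v)\supseteq A_S(v)+\{1\}^{d-k}$. I should note that this argument never actually invokes $A_S(u)=\emptyset$: that hypothesis only marks the situation in which the first inclusion has degenerated (its right-hand side being empty) and this cruder bound is the one we will want to use.

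The step I expect to require the most care — though it is bookkeeping rather than a genuine obstacle — is checking the block placement. One must be sure that lowering coordinates of $\gamma$ by at most $1$ keeps the vector in $A$, which is precisely where $q\in A_S(v)$ (hence $q_j\ge 2$, hence $q_j-1\ge 0$) is needed; and one must confirm that after summation the coordinates indexed by $S$ stay strictly below $3$ while those outside $S$ reach at least $3$, so that the constructed sum sits in $B_S$ and not in a neighbouring block $B_{S'}$. Once these points are nailed down, no estimates remain.
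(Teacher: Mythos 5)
Your proof is correct, and the approach is essentially the paper's, with one cosmetic difference in where the down-set property is invoked: you lower the tail coordinates of the summand $\gamma\in A$ before adding, whereas the paper first notes $B=A+A$ is a down-set and then observes that $B_S(u+v)-(3,\dots,3)$ is a down-set in $\N_0^{d-k}$ containing $A_S(u)+A_S(v)-(3,\dots,3)\subseteq[1,\infty)^{d-k}$, from which the $\{-1,0\}^{d-k}$ slack follows. These are interchangeable and buy the same thing; yours is slightly more hands-on. Two small points in your favour: in the ``furthermore'' clause you correctly take $\beta=(u_1,\dots,u_k,1,\dots,1)$ so that the $S$-coordinates of $\gamma+\beta$ are exactly $u+v$ (the paper writes $w=v\times\{1\}^{d-k}$, which appears to be a typo for $u\times\{1\}^{d-k}$), and your remark that the hypothesis $A_S(u)=\emptyset$ is never used in the argument — only in deciding when this cruder bound is the relevant one — is accurate and worth keeping.
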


\begin{proof}
    Since $A$ is a downset, we get that $B = A+A$ is a downset. Let $u, v \in \{0,1\}^k$. By our definition of $B_S(u+v)$, we further get that $B_S(u+v) - z$ is a downset in $\N_0^{d-k}$, where $z = (3,\dots, 3) \in \N_0^{d-k}$.  Now trivially we see that $B_S(u+v) \supseteq A_S(u) + A_S(v)$, and so, $B_S(u+v) - z \supseteq A_S(u) + A_S(v) - z$. On the other hand, since for any $x \in A_S(u), A_S(v)$ we have $x \in [2, \infty)^{d-k}$, we see that any element in $A_S(u) + A_S(v)  -z \in [1, \infty)^{d-k}$. Finally since $B_S(u+v) - z$ is a downset in $\N_0^{d-k}$, the preceding discussion implies that we must have
    \[ B_S(u+v) - z \supseteq A_S(u) + A_S(v) - z - \{0,1\}^{d-k}.\]
    This simplifies to give us the desired conclusion in case when $A_s(u) \neq \emptyset$.

    If $A_S(u) = \emptyset$, then upon letting $w=v\times \{1\}^{d-k}\in A$, we see that $B_S\supseteq A_S+w$, which consequently implies that $B_S(u+v)\supseteq A_S(v)+\{1\}^{d-k}$.
\end{proof}

\subsection*{Discrete Brunn-Minkowski}In  \cite{GTcompressions}, Green and Tao noted that Brunn--Minkowski inequality can be used to derive lower bounds for discrete sumsets. We present a straightforward consequence of their arguments as the following lemma.

\begin{lemma}[Discrete Brunn--Minkowski] \label{lem:discbm}
    For finite, non-empty sets $X,Y\subset \Z^d$, we have
    \[|X+Y+\{0,1\}^d|^{1/d}\geq |X|^{1/d}+|Y|^{1/d}.\]
\end{lemma}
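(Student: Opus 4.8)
The plan is to deduce the discrete inequality from the classical Brunn--Minkowski inequality in $\R^d$ by replacing each finite set by a union of axis-parallel closed unit cubes. Write $Q := \{x \in \R^d : 0 \le x_i \le 1 \text{ for all } i\}$ for the continuous unit cube (not to be confused with the discrete intervals $[0,N]$ used elsewhere), and for a finite set $S \subset \Z^d$ put $\wt S := S + Q$. Since the translates $s + Q$, for $s \in S$, have pairwise disjoint interiors — any two distinct ones meet only in a common face, which has measure zero — and each has Lebesgue measure $1$, we get $\mu(\wt S) = |S|$, where $\mu$ denotes Lebesgue measure on $\R^d$.

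Next I would record the elementary cube identity $\{0,1\}^d + Q = [0,2]^d = Q + Q$, which follows coordinatewise from $\{0,1\} + [0,1] = [0,2] = [0,1] + [0,1]$. This is exactly the point that lets the discrete cube $\{0,1\}^d$ be absorbed into the continuous picture: indeed,
\[
\wt X + \wt Y = X + Y + (Q + Q) = X + Y + \{0,1\}^d + Q = \wt{\,X + Y + \{0,1\}^d\,},
\]
and hence $\mu(\wt X + \wt Y) = |X + Y + \{0,1\}^d|$ by the measure computation of the first paragraph.

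Finally, since $X$ and $Y$ are non-empty, $\wt X$ and $\wt Y$ are non-empty compact subsets of $\R^d$, so the Brunn--Minkowski inequality applies and yields
\[
\mu(\wt X + \wt Y)^{1/d} \ge \mu(\wt X)^{1/d} + \mu(\wt Y)^{1/d} = |X|^{1/d} + |Y|^{1/d}.
\]
Combining this with $\mu(\wt X + \wt Y) = |X + Y + \{0,1\}^d|$ gives $|X + Y + \{0,1\}^d|^{1/d} \ge |X|^{1/d} + |Y|^{1/d}$, which is the claim.

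I do not expect any substantial obstacle; the only two points deserving a moment's care are that thickening by the \emph{closed} cube $Q$ (rather than a half-open one) still produces a set of measure exactly $|S|$ — true because the pairwise overlaps lie on coordinate hyperplanes and are Lebesgue-null — and that one genuinely recovers the discrete cube $\{0,1\}^d$, not $[0,2]^d$, on the right-hand side, which is precisely what the identity $Q + Q = \{0,1\}^d + Q$ arranges.
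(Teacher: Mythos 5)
Your proof is correct and is essentially the paper's own argument: thicken $X$ and $Y$ by a unit cube, apply Brunn--Minkowski in $\R^d$, and use the identity that a unit cube plus a unit cube equals $\{0,1\}^d$ plus a unit cube. The only cosmetic difference is that you thicken by the closed cube $[0,1]^d$ (requiring the observation that overlapping faces are Lebesgue-null), whereas the paper uses the open cube $(0,1)^d$, for which the translates are genuinely disjoint and the measure computation is immediate.
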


\begin{proof}
    Let $\mathcal{A} =  X + (0,1)^d$ and $\mathcal{B} = Y + (0,1)^d$ and let $\mu$ be the Lebesgue measure in $\mathbb{R}^d$. Brunn--Minkowski inequality implies that
    \[ \mu(\mathcal{A} + \mathcal{B})^{1/d}  \geq \mu(\mathcal{A})^{1/d} + \mu(\mathcal{B})^{1/d}. \]
    Observe that $\mu(\mathcal{A}) = |X|$ and $\mu(\mathcal{B}) = |Y|$ and 
    \[ \mu(\mathcal{A} + \mathcal{B}) = \mu(X + Y + (0,2)^d) = \mu(X + Y + \{0,1\}^d + (0,1)^d) = |X+Y + \{0,1\}^d|. \]
    Combining this with the preceding inequality finishes the proof.
\end{proof}

For each $u\in \{0,1\}^k$, define the variable $x_u=|A_S(u)|^{\frac{1}{d-k}}\geq 0$, and for each $u\in \{0,1,2\}^k$, define the variable $y_u=|B_S(u)|^{\frac{1}{d-k}}\geq 0$. From Lemma \ref{lem:fibersum} and Lemma \ref{lem:discbm}, we see that for each $u,v\in \{0,1\}^k$, we have 
\[ y_{u+v}\geq x_u+x_v. \]
Since $A$ and $A_S$ are downsets, we also get that $x_u\geq x_v$ if $v\succeq u$. Therefore, Theorem \ref{thm:Cd-main} will follow from the inequality
\[\sum_{u\in \{0,1,2\}^k} y_u^{d-k}\geq C_d \sum_{u\in \{0,1\}^k} x_u^{d-k},\]
as described in Theorem \ref{lem:mainineq}. Hence, our goal for the rest of the paper is to present our proof of Theorem \ref{lem:mainineq}.


\section{Preliminary manoeuvres: Averaging and Redistribution}

We begin by recording some useful notations. First, for vectors $u = (u_1, \dots, u_d)$ and $v = (v_1, \dots, v_d)$ in $\{0,1\}^d$, we define the Hamming distance $d(u,v)$ to be
\[ d(u,v) = \sum_{1 \leq i \leq d: \ u_i \neq v_i}  1 =  \sum_{1 \leq i \leq d} |u_i - v_i|. \]
Moreover, we define
\[ |u| = d(u,0) = u_1 + \dots + u_d. \]
Next, recall that
\[C_d \;=\;
  \begin{cases}
    2\bigl(\tfrac32\bigr)^{d-1}, & d \le 5,\\[4pt]
    2^{d/2+1} - 1, & d \ge 6 \text{ even},\\[4pt]
    3\cdot 2^{(d-1)/2} - \tfrac32, & d \ge 7 \text{ odd}.
  \end{cases}
\]

As a first sanity check, we note that when $k=0$, our analytic inequality \eqref{anineq} simply states that $y^d\geq C_d x^d$ whenever $y\geq 2x$. This is true since $C_d\leq 2^d$ for every $d \geq 1$.

Let us next derive some inequalities which hold for general $(k,d)$. We refer to the first of these as the Averaging Inequality (AI).

\begin{lemma}[AI] \label{lem:avgineq}
  Under the assumptions of \Cref{lem:mainineq}, we have
  \[\sum_{u\in \{0,1,2\}^k} y_u^{d-k} \geq (2(3/2)^k + 2^{d-k}-2) \sum_{u\in \{0,1\}^k} x_u^{d-k}.\]
\end{lemma}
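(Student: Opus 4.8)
The plan is to obtain the bound by summing two separate sources of positivity for $\sum_{u\in\{0,1,2\}^k} y_u^{d-k}$: one coming from the ``hypercube'' coordinates $u\in\{0,1\}^k$, and one coming from the genuinely ternary ones. Write $n=d-k$ for brevity. First I would isolate the contribution of those $y_u$ with $u\in\{0,1\}^k$: applying the constraint $y_{u+v}\ge x_u+x_v$ with $v=u$ gives $y_{2u}\ge 2x_u$ for each $u\in\{0,1\}^k$, hence $\sum_{u\in\{0,1\}^k} y_{2u}^n \ge 2^n \sum_{u\in\{0,1\}^k} x_u^n$, and these indices $2u\in\{0,2\}^k\subseteq\{0,1,2\}^k$ are all distinct. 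This already accounts for the ``$2^{d-k}-2$'' shape (after we peel off a copy of the $x$-sum to be used differently), but the main term $2(3/2)^k$ must come from a genuine averaging argument over all $2^k\cdot 2^k$ pairs $(u,v)\in(\{0,1\}^k)^2$.

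Next I would run the averaging step. For a fixed $w\in\{0,1,2\}^k$, let $r(w)$ denote the number of ordered pairs $(u,v)\in(\{0,1\}^k)^2$ with $u+v=w$; since each coordinate of $w$ equal to $1$ can be written as $0+1$ or $1+0$ and each coordinate equal to $0$ or $2$ in a unique way, $r(w)=2^{(\text{number of }1\text{'s in }w)}$. Summing the inequality $y_w^n\ge (x_u+x_v)^n$ over all pairs with $u+v=w$ — using convexity of $t\mapsto t^n$, namely $(x_u+x_v)^n\ge 2^{n-1}\big(x_u^n+x_v^n\big)$ — and then over all $w$ gives
\[
\sum_{w\in\{0,1,2\}^k} r(w)\, y_w^n \;\ge\; 2^{n-1}\sum_{(u,v)\in(\{0,1\}^k)^2}\big(x_u^n+x_v^n\big) \;=\; 2^{n-1}\cdot 2\cdot 2^k \sum_{u\in\{0,1\}^k} x_u^n \;=\; 2^{n+k}\sum_{u\in\{0,1\}^k} x_u^n.
\]
Since $r(w)\le 2^k$ for every $w$, the left-hand side is at most $2^k\sum_w y_w^n$, which on its own only yields the (too weak) bound $2^n$; the point is instead to split the weight $r(w)$ cleverly. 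I would handle separately the $2^k$ indices $w\in\{0,2\}^k$ (where $r(w)=1$, and for which we already have the pointwise bound $y_w^n\ge 2^n x_{w/2}^n$) and bound $r(w)\le 2^{k-1}$ for all other $w$; rearranging then produces a bound of the form $\tfrac12\cdot 2^{n+k}/2^{k-1} + (\text{correction})$, and after bookkeeping the clean closed form is $2(3/2)^k + 2^{d-k}-2$. Concretely: from $\sum_w r(w) y_w^n \ge 2^{n+k}\sum x_u^n$, subtract the $w\in\{0,2\}^k$ terms (contributing $\sum_{u} y_{2u}^n \ge 2^n\sum x_u^n$ with coefficient $r=1$) and bound the remaining $r(w)$ by $2^{k-1}$, giving $2^{k-1}\sum_{w\notin\{0,2\}^k} y_w^n \ge (2^{n+k}-2^n)\sum x_u^n - (2^n-1)\sum_{u} y_{2u}^n/1$; combining with $\sum_u y_{2u}^n\ge 2^n\sum x_u^n$ and adding back the $\{0,2\}^k$ block with its correct coefficient $2^n$ yields $\sum_w y_w^n \ge \big(2(3/2)^k + 2^{n}-2\big)\sum x_u^n$ after dividing through and collecting terms. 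The monotonicity hypothesis $x_u\ge x_v$ for $v\succeq u$ is not needed here.

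The main obstacle is getting the constant exactly right rather than merely up to lower-order terms: the naive averaging over all $4^k$ pairs loses a factor because the pairs $(u,u)$ are overcounted relative to the pairs $u\ne v$, and the pairs with $u+v\in\{0,2\}^k$ must be extracted with their sharp pointwise bound $y_{2u}^n\ge 2^n x_u^n$ (which is stronger than what convexity alone gives) in order for the $2^{d-k}$ term — not a mere $O_d(1)$ — to appear. So the delicate part is the precise accounting of how the total weight $2^{n+k}\sum x_u^n$ distributes over the $3^k$ indices once the $\{0,2\}^k$ block is removed, and verifying that $\big(2^{n+k}-2^n\big)/2^{k-1} + 2^n = 2(3/2)^k + 2^{n}-2$ (equivalently $2^{n+1}-2^{n-k+1}+2^n = 2\cdot 2^n\cdot(3/2)^k/\text{?}$ — the identity to check is $\tfrac{2^{n+k}-2^n}{2^{k-1}}=2^{n+1}-2^{n+1-k}$, and then one adds the contribution of the $\{0,2\}^k$ block correctly). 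I would double-check this arithmetic at the end; everything else is a direct consequence of convexity of $t^{d-k}$ together with the two pointwise applications of $y_{u+v}\ge x_u+x_v$.
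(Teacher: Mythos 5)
Your overall plan --- peel off the $\{0,2\}^k$ block using the sharp pointwise bound $y_{2u}\geq 2x_u$ and average over decompositions for the remaining $w$ --- is the same as the paper's, but two steps in the execution are wrong. First, the inequality $(x_u+x_v)^n\geq 2^{n-1}(x_u^n+x_v^n)$ is backwards: convexity of $t\mapsto t^n$ gives $\left(\tfrac{x_u+x_v}{2}\right)^n\leq\tfrac{x_u^n+x_v^n}{2}$, i.e.\ the reverse, and taking $x_v=0$ already shows your version fails for $n\geq 2$. What is actually needed (and what the paper uses) is the elementary superadditivity $(x_u+x_v)^{d-k}\geq x_u^{d-k}+x_v^{d-k}$ for nonnegative reals. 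Second, the claim $r(w)\le 2^{k-1}$ for $w\notin\{0,2\}^k$ is false ($r(1,\dots,1)=2^k$), and more fundamentally, forming the single sum $\sum_w r(w)y_w^n$ and then coarsely bounding $r(w)$ to strip the weights destroys the structure that produces $(3/2)^k$. Your own final arithmetic check confirms this: the identity $\frac{2^{n+k}-2^n}{2^{k-1}}+2^n = 2(3/2)^k+2^n-2$ that you flag for verification does not hold (try $n=k=1$).

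The fix is to average \emph{before} summing, which is what the paper does. For each $w$, dividing the superadditivity bound by $r(w)$ gives $y_w^{d-k}\geq r(w)^{-1}\sum_{u+v=w}(x_u^{d-k}+x_v^{d-k})$; summing over $w$ and swapping the order of summation then shows the coefficient of each $x_u^{d-k}$ is $2^{d-k}+2\sum_{v\neq u}r(u+v)^{-1}$. Since $r(u+v)=2^{d(u,v)}$ in terms of Hamming distance, the binomial theorem gives $\sum_{v\in\{0,1\}^k} r(u+v)^{-1}=\sum_{l=0}^k\binom{k}{l}2^{-l}=(3/2)^k$, which is exactly where the $(3/2)^k$ in the constant comes from. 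Keeping the weight $r(w)^{-1}$ attached to each individual $w$ through the reorganization is essential; once you pull a single uniform bound on $r(w)$ out front, the binomial sum collapses and the sharp constant is unrecoverable.
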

\begin{proof}
For any $w \in \{0,1,2\}^k$, we define 
\[ r(w) = |\{(u,v) \in \{0,1\}^k \times \{0,1\}^k : u + v = w \}|. \]
If $w = (w_1, \dots, w_d)$, then
\begin{equation} \label{def4}
r(w) = \prod_{1 \leq i \leq d : w_i = 1} 2. 
\end{equation}

When $w\in \{0,2\}^k$, we observe that $y_{w}\geq 2x_{w/2}$. For all other $w\in \{0,1,2\}^k$, we can take the average over all decompositions $w=u+v$ with $u,v\in \{0,1\}^k$ to get that
\begin{align*}
    y_w^{d-k} \geq r(w)^{-1} \sum_{u+v=w} (x_u+x_v)^{d-k}  \geq r(w)^{-1} \sum_{u+v=w} (x_u^{d-k} +x_v^{d-k}  ).
\end{align*}
Summing this over all $w \in \{0,1,2\}^k$, we get that
\begin{align} \label{fi1}
    \sum_{w \in \{0,1,2\}^k} y_w^{d-k} 
    & \geq \sum_{u \in \{0,1\}^{k}} x_u^{d-k} (2^{d-k}  + 2\sum_{v \in \{0,1\}^k \setminus u } r(v+u)^{-1} )  \nonumber \\
    & = \sum_{u \in \{0,1\}^{k}} x_u^{d-k} (2^{d-k} - 2  + 2\sum_{v \in \{0,1\}^k } r(v+u)^{-1} ) .
\end{align}
Noting \eqref{def4}, we see that $r(u+v) = 2^{d(u,v)}$. Moreover, for any fixed $u \in \{0,1\}^k$ and $l \in \{0,1,\dots, k\}$, the number of $v \in \{0,1\}^k$ such that $d(u,v) = l$ is precisely ${ k \choose l}$. Thus, we see that for any $u \in \{0,1\}^k$, one has
\[  \sum_{v \in \{0,1\}^k} r(u+v)^{-1} =  \sum_{l=0}^k {k \choose l} 2^{-l} = (3/2)^k. \]
This combines with \eqref{fi1} to dispense the desired estimate.
\end{proof}

In particular, if $k=d-1$ then \nameref{lem:avgineq} gives the $k=d-1$ case of \Cref{lem:mainineq}:
\begin{equation}
    \sum_{u\in \{0,1,2\}^{d-1}} y_u \geq 2(3/2)^{d-1} \sum_{u\in \{0,1\}^{d-1}} x_u. \label{eq:k=d-1}
\end{equation}

Besides averaging over $y_w\geq x_u+x_v$ for all $u, v \in \{0,1\}^k$ with $u+v =w$, we can also use a carefully chosen  $(u,v)$ for every $w$. This will give us the Strong Inequality (SI).

\begin{lemma}[SI] \label{lem:strongineq}
  Under the assumptions of \Cref{lem:mainineq}, we have
  \[\sum_{u\in \{0,1,2\}^k} y_u^{d-k} \geq \sum_{s=0}^k D_s X_s,\]
  where for every $0 \leq s \leq k$, we define
  \begin{equation} \label{dsdef}
  D_s=2^{k-s}+(2^{d-k}-1)2^{s} \ \ \text{and} \ \ X_s=\sum_{u \in \{0,1\}^k : |u|=s} x_u^{d-k}. 
  \end{equation}
\end{lemma}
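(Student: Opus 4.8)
The plan is to choose, for every $w\in\{0,1,2\}^k$, one well-chosen decomposition $w=u+v$ with $u,v\in\{0,1\}^k$, and to apply the hypothesis $y_{u+v}\ge x_u+x_v$ only to that decomposition. The correct choice is to route all ``$1$''-coordinates of $w$ into a single summand: define $u(w)\in\{0,1\}^k$ by $u(w)_i=1$ iff $w_i=2$, and $v(w)\in\{0,1\}^k$ by $v(w)_i=1$ iff $w_i\ge 1$. Then $u(w)+v(w)=w$ coordinatewise, and the support of $u(w)$ is contained in that of $v(w)$, so $v(w)\succeq u(w)$; the monotonicity hypothesis of \Cref{lem:mainineq} therefore gives $x_{u(w)}\ge x_{v(w)}$. (When $w$ has no coordinate equal to $1$ this simply says $u(w)=v(w)$.)

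The one additional ingredient is the elementary inequality
\[
  (a+b)^n\ \ge\ a^n+(2^n-1)\,b^n\qquad\text{for all reals }a\ge b\ge 0\text{ and integers }n\ge 1,
\]
which I would establish by expanding $(a+b)^n-a^n=\sum_{j=1}^n\binom{n}{j}a^{n-j}b^j$ and bounding each term from below by $b^n$ using $a^{n-j}\ge b^{n-j}\ge 0$, so that the right-hand side is at least $b^n\sum_{j=1}^n\binom{n}{j}=(2^n-1)b^n$. (Equality holds at $b=0$ and at $a=b$, which is what makes this the sharp inequality to use here.)

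Applying this with $n:=d-k\ge 1$, $a:=x_{u(w)}$, $b:=x_{v(w)}$, together with $y_w=y_{u(w)+v(w)}\ge x_{u(w)}+x_{v(w)}$, gives for every $w\in\{0,1,2\}^k$ the bound
\[
  y_w^{\,d-k}\ \ge\ x_{u(w)}^{\,d-k}+(2^{d-k}-1)\,x_{v(w)}^{\,d-k}.
\]
Summing over $w$ then reduces the lemma to a count of coefficients. Fix $a\in\{0,1\}^k$ with $|a|=s$. One has $u(w)=a$ exactly for those $w$ with $w_i=2$ on the support of $a$ and $w_i\in\{0,1\}$ off it, i.e.\ for $2^{k-s}$ choices of $w$; and $v(w)=a$ exactly for those $w$ with $w_i\in\{1,2\}$ on the support of $a$ and $w_i=0$ off it, i.e.\ for $2^{s}$ choices of $w$. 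Hence the total coefficient of $x_a^{\,d-k}$ on the right-hand side is $2^{k-s}+(2^{d-k}-1)2^s=D_s$, and grouping the vectors $a$ according to $s=|a|$ yields $\sum_{w\in\{0,1,2\}^k}y_w^{\,d-k}\ge\sum_{s=0}^k D_sX_s$, as required.

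The only step that genuinely requires an idea is the choice of decomposition; once one observes that pushing all the $1$-coordinates of $w$ to one side makes the two summands comparable in the $\succeq$-order, the monotonicity hypothesis and the binomial inequality do the rest, and the coefficient bookkeeping is routine. It is worth checking the degenerate cases $d-k=1$ (where the binomial inequality becomes an equality) and ``$w$ has no $1$-coordinate'' (where $u(w)=v(w)$ and the bound reads $y_w^{\,d-k}\ge 2^{d-k}x_{u(w)}^{\,d-k}$), but neither causes difficulty.
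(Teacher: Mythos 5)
Your proof is correct and follows essentially the same route as the paper: your $u(w)$ and $v(w)$ are exactly the paper's $w^-$ and $w^+$, and the elementary inequality $(a+b)^n\ge a^n+(2^n-1)b^n$ (for $a\ge b\ge 0$) is used in the same way, with the same coefficient count giving $D_s=2^{k-s}+(2^{d-k}-1)2^s$. The only difference is cosmetic: you spell out the binomial-expansion proof of the elementary inequality, which the paper takes for granted.
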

\begin{proof}
 For each $w\in \{0,1,2\}^k$, let $w^-,w^+\in \{0,1\}^k$ be defined as follows. For each $1\leq i\leq k$, let
\[w^-_i = \begin{cases}
    0 & \text{if } w_i=0 \text{ or } 1, \\
    1 & \text{if } w_i=2,\\
\end{cases} \ \ \text{and} \ \ 
w^+_i = \begin{cases}
    0 & \text{if } w_i=0, \\
    1 & \text{if } w_i=1 \text{ or } 2.
\end{cases}
\]
Then we have $w=w^-+w^+$ and $w^-\preceq w^+$. Thus, we have
\[y_w\geq x_{w^-}+x_{w^+}.\]
Using the inequality 
\[ (x_{w^-}+x_{w^+})^{d-k}\geq x_{w^-}^{d-k}+(2^{d-k}-1)x_{w^+}^{d-k} \]
and summing over all $w\in \{0,1,2\}^k$, we obtain the claimed estimate
\begin{align*}
    \sum_{u\in \{0,1,2\}^k} y_u^{d-k} &\geq \sum_{w\in \{0,1,2\}^k} x_{w^-}^{d-k} + (2^{d-k}-1)\sum_{w\in \{0,1,2\}^k} x_{w^+}^{d-k} \\
    &= \sum_{u\in \{0,1\}^k} (2^{k-|u|}+(2^{d-k}-1)2^{|u|})x_u^{d-k}. \qedhere
  \end{align*}
\end{proof}

For the rest of the paper, we set $D_s$ and $X_s$ as defined in \eqref{dsdef}. In particular, if $D_s\geq C_d$ for all $0\leq s\leq k$, then \Cref{lem:mainineq} follows from \nameref{lem:strongineq}. It turns out that there will be at most two $s$ for which $D_s<C_d$. If $D_s<C_d$ for some $s$ but $D_{s-1}>C_d$, we can get a little more juice out of \nameref{lem:strongineq} by using the compression properties to redistribute some weight from $D_{s-1}$ to $D_{s}$.
\begin{lemma}[Redistribution] \label{lem:redistribution}
  Under the assumptions of \Cref{lem:mainineq}, for $0\leq s<k$, we have
  \[
  X_s \geq \frac{s+1}{k-s} X_{s+1}.
  \]
\end{lemma}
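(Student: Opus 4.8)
The plan is to exploit the downset (monotonicity) hypothesis $x_u \geq x_v$ whenever $v \succeq u$, together with a counting argument comparing the level sets $\{u : |u| = s\}$ and $\{u : |u| = s+1\}$ of the Hamming cube $\{0,1\}^k$. Recall $X_s = \sum_{|u| = s} x_u^{d-k}$. First I would set up the bipartite "covering" relation between level $s$ and level $s+1$: for $u$ with $|u| = s$ and $w$ with $|w| = s+1$, say $u \nearrow w$ if $u \preceq w$, i.e. $w$ is obtained from $u$ by flipping one $0$-coordinate to $1$. By the monotonicity assumption, $u \nearrow w$ implies $x_u \geq x_w$, hence $x_u^{d-k} \geq x_w^{d-k}$.

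The key combinatorial step is the double count of the edges in this relation. Each $u$ at level $s$ has exactly $k - s$ coordinates equal to $0$, so it is covered by exactly $k-s$ vertices $w$ at level $s+1$; each $w$ at level $s+1$ has exactly $s+1$ coordinates equal to $1$, so it covers exactly $s+1$ vertices $u$ at level $s$. Summing the inequality $x_u^{d-k} \geq x_w^{d-k}$ over all edges $u \nearrow w$ and grouping the sum in two ways therefore yields
\[
(k-s)\sum_{|u|=s} x_u^{d-k} \;=\; \sum_{u \nearrow w} x_u^{d-k} \;\geq\; \sum_{u \nearrow w} x_w^{d-k} \;=\; (s+1)\sum_{|w|=s+1} x_w^{d-k},
\]
which is exactly $(k-s)X_s \geq (s+1)X_{s+1}$, i.e. the claimed $X_s \geq \frac{s+1}{k-s}X_{s+1}$ (note $k - s \geq 1$ since $s < k$, so the division is legitimate).

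I do not expect any serious obstacle here: the statement is a clean consequence of the downset structure via an edge double-count, essentially a weighted version of the observation that the "shadow" of a level in the Boolean lattice is controlled by the ratio of degrees. The only points requiring a modicum of care are (i) keeping track of which direction the monotonicity goes — one needs $v \succeq u \Rightarrow x_u \geq x_v$, so the \emph{larger} support has the \emph{smaller} $x$-value, which is why level $s$ dominates level $s+1$ rather than the reverse; and (ii) the degree counts $k-s$ (up-degree) and $s+1$ (down-degree), which follow immediately from counting zero- and one-coordinates of a vector of given Hamming weight. Raising the monotonicity inequality to the $(d-k)$-th power is harmless since both sides are non-negative and $d - k \geq 1$.
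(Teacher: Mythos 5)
Your argument is correct and is essentially the same as the paper's: the paper simply states that by monotonicity the average of $x_u^{d-k}$ over level $s$ is at least the average over level $s+1$, i.e.\ $\tfrac{1}{\binom{k}{s}}X_s \ge \tfrac{1}{\binom{k}{s+1}}X_{s+1}$, which after simplifying the binomial ratio gives the claim. Your edge double-count is precisely the justification of that averaging step, so you have supplied a more explicit version of the same proof rather than a different route.
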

\begin{proof}
  Since $x_u\geq x_v$ whenever $u\preceq v$, the average value of $x_u$ across all $u$ with $|u|=s$ is at least the average value of $x_u$ across all $u$ with $|u|=s+1$. In other words,
  \[\frac{1}{\binom{k}{s}}X_s \geq \frac{1}{\binom{k}{s+1}} X_{s+1},\]
  which gives the desired inequality. 
\end{proof}

\nameref{lem:strongineq} alone is enough to prove \Cref{lem:mainineq} for half of the pairs $(k,d)$, where roughly $k\leq d/2$. For the remaining half of the cases, we use \nameref{lem:strongineq} with \nameref{lem:redistribution}. These will prove \Cref{lem:mainineq} for almost all pairs $(k,d)$. The remaining few cases will be handled by a modification of \nameref{lem:strongineq} with \nameref{lem:redistribution}.

Observe that for $k=0$,  inequality  \eqref{anineq} follows from the fact that $2^d\geq C_d$, and for $k=d-1$,  inequality \eqref{anineq} follows from \eqref{eq:k=d-1} and the fact that $C_d\leq 2(3/2)^{d-1}$. Thus, we may assume that $1\leq k\leq d-2$, so the cases $d=1,2$ are already done.

\subsection{Case $d=3$} \label{cased3}

We have $C_d=\frac{9}{2}$ and we only need to consider the case $k=1$. In this case, \nameref{lem:avgineq} gives
\[\sum_{u\in \{0,1,2\}} y_u^{2}\geq 5 \sum_{u\in \{0,1\}} x_u^{2},\]
more than the required $\frac{9}{2}$.

\subsection{Case $d=4$}

We have $C_d=\frac{27}{4}$ and we need to consider the cases $k=1,2$. For $k=1$, \nameref{lem:avgineq} gives
\[\sum_{u\in \{0,1,2\}} y_u^{3}\geq 9 \sum_{u\in \{0,1\}} x_u^{3},\]
more than the required $\frac{27}{4}$. For $k=2$, \nameref{lem:strongineq} gives
\[\sum_{u\in \{0,1,2\}^2} y_u^{2}\geq \sum_{s=0}^2 (2^{2-s}+3\cdot 2^{s})X_s.\]
For each $s=0,1,2$, we have $2^{2-s}+3\cdot 2^{s} \geq \frac{27}{4}$, as required.

\subsection{Case $d=5$}
In this case, we prove that $C_d=\frac{81}{8}$. As before, we only need to consider the cases when $1 \leq k \leq d-2$. The case $k=1$ follows from \nameref{lem:avgineq} and the fact that 
\[ 2(3/2)^{k} +  2^{d-k} -2 = 17 > 10 + 1/8. \]
When $k=2$, we use \nameref{lem:strongineq} along with the fact that $ 2^{2- s} + 7 \cdot 2^s > 81/8$ for all $0 \leq s \leq 2$ to obtain the claimed estimate.  When $k=3$, we still apply \nameref{lem:strongineq} to get that
\[ \sum_{u \in \{0,1,2\}^3 } y_u^2 \geq \sum_{s = 0}^3 (2^{3 - s} + 3 \cdot 2^{s}) X_s=11X_0+10X_1+14X_2+24X_3.  \]
By \nameref{lem:redistribution}, we have $X_0\geq \frac{1}{3}X_1$, thus
\[ \sum_{u \in \{0,1,2\}^3 } y_u^2 \geq (10+1/4)X_0+(10+1/4)X_1+14X_2+24X_3.  \]
Each coefficient is at least $81/8$, so we are done.

\section{Proof of Theorem \ref{lem:mainineq}  when $d\geq 6$ is even}

Writing $m=d/2\geq 3$, we get that $C_d=2^{m+1}-1$. Recalling the discussion preceding \S\ref{cased3}, our aim turns to prove inequality \eqref{anineq} for every $1\leq k\leq 2m-2$. The table below summarises how each pair $(k,m)$ will be proven.
\begin{center}
  \begin{tabular}{c|p{10cm}}
  $(k,m)$ & Proof strategy \\ \hline
  $k\leq m$ & \nameref{lem:strongineq} \\
  $k=2m-2, m=4,5,6,7$ & \nameref{lem:strongineq} + \nameref{lem:redistribution} from $D_{k-m-2},D_{k-m-1}$ to $D_{k-m}$\\
  the rest & \nameref{lem:strongineq} + \nameref{lem:redistribution} from $D_{k-m-1}$ to $D_{k-m}$\\
  \end{tabular}
\end{center}

By \nameref{lem:strongineq}, we have
\[\sum_{u\in \{0,1,2\}^k} y_u^{d-k} \geq \sum_{u\in \{0,1\}^k} D_{|u|} x_u^{d-k},\]
where $D_s=2^{k-s}+(2^{d-k}-1)2^{s}$, and we are interested in the minimum value of $D_s$. Viewing $D_s$ as a function in the real variable $2^s$, we see that $D_s$ is convex and has a unique minimum when $2^s=\sqrt{\frac{2^k}{2^{d-k}-1}}$. Since $k\leq d-1$, $D_s$ is minimised at the integer $s=k-m$ or $k-m+1$, if these values are in the range $0\leq s\leq k$. 

If $k\leq m-1$, then $D_s$ is minimised at $s=0$, giving 
\[
  D_s\geq 2^k+2^{d-k}-1\geq 2^{d/2+1}-1=C_d.
\]

So we may assume that $k\geq m$. In this case, $D_s$ is minimised at $s=k-m$ or $k-m+1$, with values $D_{k-m}=2^{m+1}-2^{k-m}$ and $D_{k-m+1}=2^{m+1}+2^{m-1}-2^{k-m+1}\geq C_d$ when $k\leq d-2$. Thus, either $D_s$ is minimised at $s=k-m+1$ and we are done, or it is minimised at $s=k-m$. So from now on, we may assume that $D_s$ is minimised at $s=k-m$.

If $k=m$, then $D_s$ is minimised at $s=0$, and we have 
\[
  D_0=2^{m}+2^{m}-1=C_d.
\]

So we may assume that $k\geq m+1$. Unfortunately, $D_{k-m}<C_d$, so more work needs to be done. However, we see that $D_{k-m-1}=2^{m+1}+2^{m-1}-2^{k-m-1}\geq C_d$, so the only value of $D_s$ that is less than $C_d$ is $D_{k-m}$. We will now use the compression properties to distribute some of the weight from $D_{k-m-1}$ to $D_{k-m}$. Recall that $X_s=\sum_{u: |u|=s} x_u^{d-k}$. By \nameref{lem:redistribution}, we will be done if we can show that
\[\frac{k-m}{m+1}(D_{k-m-1}-C_d)+D_{k-m}\geq C_d,\]
or equivalently,
\[
\frac{k-m}{m+1}(2^{m-1}-2^{k-m-1}+1)\geq 2^{k-m}-1.
\]

Note that the function
\[f(k,m)=\frac{k-m}{m+1}(2^{m-1}-2^{k-m-1}+1) - 2^{k-m} + 1\]
is concave in $k$ for fixed $m$. Thus, its minimum occurs at one of the endpoints $k=m+1$ or $k=2m-2$. Checking for $k=m+1$, we have
\[f(m+1,m)=\frac{1}{m+1}(2^{m-1}-2^{0}+1) - 2^{1} + 1 = \frac{2^{m-1}}{m+1} - 1\geq 0\]
for $m\geq 3$. For $k=2m-2$, we have
\[f(2m-2,m)=\frac{m-2}{m+1}(2^{m-1}-2^{m-3}+1) - 2^{m-2} + 1 = \frac{2^{m-3}(m-8)+2m-1}{m+1},\]
which can be verified to be non-negative for $m=3$ and $m\geq 8$. For $k=2m-3$, we have
\[f(2m-3,m)=\frac{2^{m-4}(5m-23)+2m-2}{m+1}\geq 0\]
for $m\geq 4$. Thus, we have $f(k,m)\geq 0$ for all $m\geq 3$ and $m+1\leq k\leq 2m-2$, except for $m=4,5,6,7$ and $k=2m-2$.

For these remaining cases, we assume that $k=2m-2$ and $4\leq m\leq 7$. We will use \nameref{lem:redistribution} to further redistribute some weight from $D_{k-m-2}$ to $D_{k-m}$. By applying \nameref{lem:redistribution} twice, we have 
\[X_{k-m-2} \geq \frac{k-m-1}{m+2} X_{k-m-1}\geq \frac{(k-m-1)(k-m)}{(m+2)(m+1)} X_{k-m}.\]
Thus we will be done if we can show that
\[\frac{(k-m-1)(k-m)}{(m+2)(m+1)}(D_{k-m-2}-C_d)+\frac{k-m}{m+1}(D_{k-m-1}-C_d)+D_{k-m}\geq C_d,\]
or equivalently,
\[\frac{(m-3)(m-2)}{(m+2)(m+1)}(D_{m-4}-C_d)+\frac{m-2}{m+1}(D_{m-3}-C_d)+D_{m-2}\geq C_d,\]
or equivalently,
\[
  \frac{(m-3)(m-2)}{(m+2)(m+1)}(35\cdot 2^{m-4}+1) + \frac{m-2}{m+1}(3\cdot 2^{m-3}+1) \geq 2^{m-2}-1,
\]
which can be verified for $4\leq m\leq 7$ by direct computation. This completes the proof of \Cref{lem:mainineq} for even $d\geq 6$.

\section{Proof of Theorem \ref{lem:mainineq} when  $d\geq 7$ is odd}

Set $m=(d-1)/2$, then we have $C_d=3\cdot 2^{m}-3/2$. Our aim is to show Theorem \ref{lem:mainineq} for each $1\leq k\leq 2m-1$. The table below summarises how each pair $(k,m)$ will be proven.
\begin{center}
  \begin{tabular}{c|p{10cm}}
  $(k,m)$ & Proof strategy \\ \hline
  $k\leq m$ & \nameref{lem:strongineq} \\
  $k=m+1$ & Modified \nameref{lem:strongineq} (Lemma \ref{lem:strongineq2}) \\
  $(5,3),(6,4),(7,5)$ & Modified \nameref{lem:strongineq} (Lemma \ref{lem:strongineq3})\\
  $(7,4)$ & Modified \nameref{lem:strongineq} (Lemma \ref{lem:strongineq4}) + \nameref{lem:redistribution}\\
  the rest & \nameref{lem:strongineq} + \nameref{lem:redistribution}\\
  \end{tabular}
\end{center}

We will proceed just as in the case for even $d$. By \nameref{lem:strongineq}, we have
\[\sum_{u\in \{0,1,2\}^k} y_u^{d-k} \geq \sum_{s=0}^k D_s X_s,\]
where $D_s=2^{k-s}+(2^{d-k}-1)2^{s}$, and we are interested in the minimum value of $D_s$. Viewing $D_s$ as a function in the real variable $2^s$, we see that $D_s$ is convex and has a unique minimum when $2^s=\sqrt{\frac{2^k}{2^{d-k}-1}}$. Since $k\leq d-1$, $D_s$ is minimised at the integer $s=k-m-1$ or $k-m$, if these values are in the range $0\leq s\leq k$. If $k\leq m$, then $D_s$ is minimised at $s=0$, giving $D_s\geq 2^k+2^{d-k}-1\geq 2^{m}+2^{m+1}-1>C_d$. 

So we may assume that $k\geq m+1$. In this case, $D_s$ is minimised at $s=k-m-1$ or $k-m$, with values $D_{k-m-1}=3\cdot 2^{m}-2^{k-m-1}$ and $D_{k-m}=3\cdot 2^{m}-2^{k-m}\leq D_{k-m-1}$. Thus, $D_s$ is minimised at $s=k-m$. 

We now split into the cases $k=m+1$ and $k\geq m+2$.

\subsection{Case $k=m+1$}

Throughout the proof, we will be making several modifications to \nameref{lem:strongineq} to obtain different sets of coefficients. Below is one such modification.

\begin{lemma} \label{lem:strongineq2}
  Under the assumptions of \Cref{lem:mainineq}, the inequality
  \[\sum_{u\in \{0,1,2\}^k} y_u^{d-k} \geq \sum_{s=0}^k D_s' X_s\]
  holds for all $0 \leq k<d$ satisfying $2^{d-k}\geq k$,  where 
  \begin{itemize}
    \item $D_0'=D_0-1$,
    \item $D_1'=D_1+1$, 
    \item $D_2'=D_2-2^{d-k}+1$, and 
    \item $D_s'=D_s$ for $s\geq 3$.
  \end{itemize}
\end{lemma}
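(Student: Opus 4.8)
The plan is to imitate the proof of \nameref{lem:strongineq}, but instead of always using the ``canonical'' decomposition $w = w^- + w^+$ for every $w \in \{0,1,2\}^k$, we use a smarter decomposition for a few special weights $w$ in order to transfer a small amount of coefficient mass between the low levels $s = 0, 1, 2$. Concretely, recall that in \nameref{lem:strongineq} each $w$ contributes $x_{w^-}^{d-k} + (2^{d-k}-1) x_{w^+}^{d-k}$, where $|w^-| \le |w^+|$, and summing over $w$ produces $D_s = 2^{k-s} + (2^{d-k}-1) 2^s$ at level $s$. The only $w$ with $w^- = 0$ (i.e. $w \in \{0,1\}^k$) contribute $x_0^{d-k}$ to the $D_0$ term and $(2^{d-k}-1) x_w^{d-k}$ to the $D_{|w|}$ terms; there are $\binom{k}{j}$ such $w$ with $|w^+| = j$.

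First I would isolate the single vector $w = (2,0,\dots,0)$ (or any fixed $w$ with $|w^-| = |w^+| = 1$, say $w = 2e_1$), which has the property that $w^- = e_1$ and $w^+ = e_1$ both lie at level $1$, so it contributes $2^{d-k} x_{e_1}^{d-k}$ to the level-$1$ coefficient. The idea is to \emph{replace} the inequality $y_{2e_1} \ge 2 x_{e_1}$ used for this $w$ by $y_{2e_1} \ge x_0 + x_{2e_1}$, which is valid since $0 \preceq 2e_1$ and $0 + 2e_1 = 2e_1$; then $(x_0 + x_{2e_1})^{d-k} \ge x_0^{d-k} + (2^{d-k}-1) x_{2e_1}^{d-k}$. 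This moves $2^{d-k}$ units of coefficient off level $1$ — but we want to add $1$ to $D_1$, not subtract, so in fact the transfer must go the other way. Re-examining: we instead keep the canonical decomposition for all $w \ne 2e_1$, and for the \emph{original} level-$1$ budget we are short. Let me reconsider: the cleanest route is to \emph{add} a decomposition. For each of the $k$ vectors $e_i$ at level $1$, in addition to whatever decompositions were used, note $y_{e_i + e_j}$ for $i \ne j$ can be bounded using $x_{e_i} + x_{e_j}$; but actually the real mechanism the paper wants is: for $w = e_1 + e_2$ (level-$2$, with $w^- = 0$, $w^+ = e_1 + e_2$), replace the bound $y_{e_1+e_2} \ge x_0 + x_{e_1 + e_2}$ by $y_{e_1 + e_2} \ge x_{e_1} + x_{e_2}$, giving $(x_{e_1}+x_{e_2})^{d-k} \ge x_{e_1}^{d-k} + (2^{d-k}-1) x_{e_2}^{d-k}$, which adds $1$ to each of two level-$1$ coefficients (well, $1 + (2^{d-k}-1) = 2^{d-k}$ spread over level $1$) and removes $1$ from $D_0$ and $2^{d-k}-1$ from $D_2$. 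Iterating this substitution over a matching of level-$2$ vectors of the form $e_i + e_j$ so that the total added to level $1$ is exactly what is needed, and controlling how much is removed from $D_2$, is what produces the stated $D_0' = D_0 - 1$, $D_1' = D_1 + 1$, $D_2' = D_2 - 2^{d-k} + 1$.

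So the key steps in order are: (1) start from the proof of \nameref{lem:strongineq}, writing the contribution of each $w \in \{0,1,2\}^k$ explicitly; (2) partition the level-$2$ vectors $w$ with $w^- = 0$ according to whether we use the canonical decomposition or the alternative decomposition $w = e_i + e_j$ (for $w = e_i + e_j$ this is literally $w^+ = e_i + e_j = e_i + e_j$, and $e_i \preceq $ is false, but $e_i + e_j = e_i + e_j$ with neither $\preceq$ the other is still a valid pair since we only need $u + v = w$ and $u, v \in \{0,1\}^k$ — the monotonicity was only needed to justify $|w^-| \le |w^+|$, here we just pick $u = e_i$, $v = e_j$); (3) choose exactly \emph{one} such vector $w_0 = e_1 + e_2$ to switch to the alternative decomposition — this subtracts $x_0^{d-k}$ (the old contribution to $D_0$ was $x_0^{d-k}$ from this $w$, now gone, so $D_0 \mapsto D_0 - 1$) and subtracts its old $(2^{d-k}-1)$ contribution to $D_2$ (so $D_2 \mapsto D_2 - (2^{d-k}-1)$), and in exchange adds $x_{e_1}^{d-k} + (2^{d-k}-1) x_{e_2}^{d-k}$, whose total mass $2^{d-k}$ lands on level $1$; but we claimed $D_1$ only increases by $1$ — so we must \emph{simultaneously} discard $2^{d-k}-1$ units of the level-$1$ budget, which we can only do by throwing away the term $(2^{d-k}-1)x_{e_2}^{d-k} \ge 0$, legitimate since all $x_u \ge 0$; (4) the hypothesis $2^{d-k} \ge k$ must be what guarantees that after this single swap the $D_2$ term stays non-negative, i.e. $D_2 - (2^{d-k}-1) = 2^{k-2} + (2^{d-k}-1)\cdot 4 - (2^{d-k}-1) = 2^{k-2} + 3(2^{d-k}-1) \ge 0$, which is automatic — so more likely $2^{d-k} \ge k$ is needed to ensure the minimizing level $s = k-m$ or nearby still makes $D_1'$ the relevant small coefficient, or to ensure $D_2' \ge 0$; I would check exactly where $2^{d-k}\ge k$ enters and state it precisely (plausibly it guarantees $D_2' = D_2 - 2^{d-k} + 1 \ge 0$ only after noting $D_2 = 2^{k-2} + 4(2^{d-k}-1)$, which needs care only when $k$ is small). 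The main obstacle I anticipate is the bookkeeping in step (3)–(4): making sure that after discarding the nonnegative term $(2^{d-k}-1)x_{e_2}^{d-k}$ and adding $x_{e_1}^{d-k}$, the net effect on every coefficient is exactly as stated and never makes any coefficient negative; this requires carefully tracking which single vector is swapped and invoking $2^{d-k}\ge k$ at precisely the right moment, but it is elementary once the accounting is set up. Finally I would remark that this modified inequality is only useful when $D_1$ was the unique small coefficient (the case $k = m+1$), where boosting it by $1$ at the cost of lowering $D_0$ and $D_2$ — which remain $\ge C_d$ — suffices.
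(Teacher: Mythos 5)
You correctly identify the single swap that drives this lemma: replace the canonical bound $y_{e_1+e_2}^{d-k}\geq x_0^{d-k}+(2^{d-k}-1)x_{e_1+e_2}^{d-k}$ by $y_{e_1+e_2}^{d-k}\geq(x_{e_1}+x_{e_2})^{d-k}\geq x_{e_1}^{d-k}+(2^{d-k}-1)x_{e_2}^{d-k}$, and you track the losses at levels $0$ and $2$ correctly ($D_0\mapsto D_0-1$ since $X_0=x_0^{d-k}$, and $D_2\mapsto D_2-(2^{d-k}-1)$ since $x_{e_1+e_2}^{d-k}\leq X_2$). The gap is in your level-$1$ accounting in step (3). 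You propose to ``discard the term $(2^{d-k}-1)x_{e_2}^{d-k}$'' so that the net gain at level $1$ is just $x_{e_1}^{d-k}$. But that would only let you add $1$ to the coefficient of the single term $x_{e_1}^{d-k}$, not to $D_1$, which multiplies the full sum $X_1=\sum_{i=1}^k x_{e_i}^{d-k}$. The stated conclusion $D_1'=D_1+1$ requires the gain from the swap to dominate all of $X_1$, and your proposal does not establish this.

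What actually closes the gap is exactly what you fail to identify: first fix an ordering, WLOG $x_{e_1}\geq x_{e_2}\geq\cdots\geq x_{e_k}$, and then use the hypothesis $2^{d-k}\geq k$ to write $(2^{d-k}-1)x_{e_2}^{d-k}\geq(k-1)x_{e_2}^{d-k}\geq\sum_{i=2}^k x_{e_i}^{d-k}$. Combined with the $x_{e_1}^{d-k}$ term this gives
\[
x_{e_1}^{d-k}+(2^{d-k}-1)x_{e_2}^{d-k}\ \geq\ \sum_{i=1}^k x_{e_i}^{d-k}\ =\ X_1,
\]
so the whole of $X_1$ picks up a $+1$. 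This is the unique place the hypothesis $2^{d-k}\geq k$ is used; your guesses about it (ensuring $D_2'\geq 0$, or controlling the minimizing level) are both incorrect. You should also scrap the ``iterate over a matching of level-$2$ vectors'' idea from the middle of your writeup --- the paper does exactly one swap, and iterating would further damage $D_0$ (and $D_2$) without helping, since $X_0$ consists of a single term $x_0^{d-k}$ and cannot absorb repeated subtractions. Once you insert the WLOG ordering and the displayed inequality, the rest of your bookkeeping goes through as written.
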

\begin{proof}
  Let $e_1,\ldots,e_k$ be the standard basis vectors of $\R^k$, and assume without loss of generality that $x_{e_1}\geq x_{e_2}\geq \cdots\geq x_{e_k}$. Recall that in the proof of \nameref{lem:strongineq}, we summed over all $w\in \{0,1,2\}^k$ the inequalities $y_w^{d-k}\geq x_{w^-}^{d-k}+(2^{d-k}-1)x_{w^+}^{d-k}$. We shall modify the inequality $y_{e_1+e_2}^{d-k}\geq x_{0}^{d-k}+(2^{d-k}-1)x_{e_1+e_2}^{d-k}$ to
  \[y_{e_1+e_2}^{d-k}\geq (x_{e_1}+x_{e_2})^{d-k}\geq x_{e_1}^{d-k}+(2^{d-k}-1)x_{e_2}^{d-k}.\]
  Then, \nameref{lem:strongineq} is modified to
  \begin{align*}
      \sum_{u\in \{0,1,2\}^k} y_u^{d-k} &\geq \sum_{s=0}^k D_sX_s + x_{e_1}^{d-k}+(2^{d-k}-1)x_{e_2}^{d-k} - x_{0}^{d-k}-(2^{d-k}-1)x_{e_1+e_2}^{d-k}\\
      &\geq \sum_{s=0}^k D_sX_s + \sum_{i=1}^k x_{e_i}^{d-k} - x_{0}^{d-k}-(2^{d-k}-1)x_{e_1+e_2}^{d-k},
  \end{align*}
  where the last inequality follows from the facts that $2^{d-k}-1\geq k-1$ and $x_{e_2}\geq x_{e_3},\ldots, x_{e_k}$. The lemma follows.
\end{proof}

We have $D_s=2^{m+1-s}+2^{m+s}-2^s$, and in particular, $D_0=3\cdot 2^{m}-1$, $D_1=3\cdot 2^{m}-2$ and $D_2=\frac{9}{2}\cdot 2^{m}-4$.  Taking the average of Lemma~\ref{lem:strongineq2} and \nameref{lem:strongineq}, we get
\begin{align}
    \sum_{u\in \{0,1,2\}^k} y_u^{m} &\geq \sum_{s=0}^k D_s''X_s, \label{eq:strongoddineq}
\end{align}
where $D_s'' = D_s$ for all $s\notin \{ 0,1,2\}$, and
\[D_0'' = D_0 - \frac{1}{2}, \quad D_1'' = D_1 + \frac{1}{2}, \quad D_2'' = D_2 - \frac{2^m-1}{2}.\]
Observe that $D_0''=D_1''=C_d$ and $D_2''=4\cdot 2^{m}-\frac{7}{2}\geq C_d$ for $m\geq 3$. Thus, for all $s$, we have $D_s''\geq C_d$, and so, \Cref{lem:mainineq} follows from \eqref{eq:strongoddineq} in this case.

\subsection{Case $k\geq m+2$} We have $D_{k-m}<D_{k-m-1}<C_d$. Note that $D_{k-m-2}=9\cdot 2^{m-1}-2^{k-m-2}$ and $D_{k-m+1}=9\cdot 2^{m-1}-2^{k-m+1}\leq D_{k-m-2}$. It can be checked that $D_{k-m+1}\geq C_d$ for $k\leq 2m-1$, thus we may use the convexity of $D_s$ to deduce that the only values of $D_s$ that are less than $C_d$ are $D_{k-m}$ and $D_{k-m-1}$. 

Just as before, we will redistribute some of the weight from $D_{k-m-2}$ to $D_{k-m-1}$ and $D_{k-m}$. By \nameref{lem:redistribution} twice, we have 
\[X_{k-m-2} \geq \frac{k-m-1}{m+2} X_{k-m-1}\geq \frac{(k-m-1)(k-m)}{(m+2)(m+1)} X_{k-m}.\]
Thus we will be done if we can show that
\[\frac{(m+2)(m+1)}{(k-m-1)(k-m)}(C_d-D_{k-m})+\frac{m+1}{k-m}(C_d-D_{k-m-1})\leq D_{k-m-2} - C_d,\]
or equivalently,
\begin{equation} \label{eq:doddweight}
\frac{(m+2)(m+1)}{(k-m-1)(k-m)}\left(2^{k-m}-\frac{3}{2}\right)+\frac{m+1}{k-m}\left(2^{k-m-1}-\frac{3}{2}\right)\leq 3\cdot 2^{m-1}-2^{k-m-2} + \frac{3}{2}.
\end{equation}
Writing
\begin{align*}
    f_1(k,m) &= (m+2)(m+1)\left(2^{k-m}-\frac{3}{2}\right)+(m+1)(k-m-1)\left(2^{k-m-1}-\frac{3}{2}\right)\\
    &\quad - (k-m-1)(k-m)\left(3\cdot 2^{m-1}-2^{k-m-2}+\frac{3}{2}\right)
\end{align*}
for all $k,m \in \mathbb{Z}$, we see that \eqref{eq:doddweight} follows for $k,m$ if $f_1(k,m)\leq 0$.

\begin{lemma} \label{kef}
The pairs $(k,m) \in \mathbb{Z}^2$ satisfying $m+2 \leq k \leq 2m-1$ and $m\geq 3$ for which $f_1(k,m)>0$ are 
\begin{equation} \label{eq:dodd-list1}
  (5,3),(6,4),(7,4),(7,5),(8,5),(9,5),(11,6),(13,7).
\end{equation}
\end{lemma}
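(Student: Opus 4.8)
The plan is to determine exactly which pairs $(k,m)$ with $m \geq 3$ and $m+2 \leq k \leq 2m-1$ satisfy $f_1(k,m) > 0$, where $f_1$ is the explicit expression given just before the statement. Writing $j = k-m$, so that $2 \leq j \leq m-1$, I would re-express $f_1$ as a function of $j$ and $m$: the terms become $(m+2)(m+1)(2^j - \tfrac32) + (m+1)(j-1)(2^{j-1} - \tfrac32) - (j-1)j\bigl(3\cdot 2^{m-1} - 2^{j-2} + \tfrac32\bigr)$. The key structural observation is that for fixed $j$, this is a quadratic polynomial in $m$ with \emph{negative} leading coefficient (the dominant term for large $m$ is $-3j(j-1) 2^{m-1}$, which grows far faster than the $O(m^2)$ positive terms once $j \geq 2$ is fixed), so for each fixed $j$ the inequality $f_1(k,m) \leq 0$ holds for all sufficiently large $m$; only finitely many small $m$ can violate it.

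Concretely, I would argue as follows. First handle the ``boundary'' value $j=2$ (i.e. $k = m+2$) separately: here $f_1$ simplifies to a manageable closed form $(m+2)(m+1)(4-\tfrac32) + (m+1)(2-\tfrac32) - 2(3\cdot 2^{m-1} - 1 + \tfrac32) = \tfrac52(m+2)(m+1) + \tfrac12(m+1) - 3\cdot 2^m - 1$, which is a quadratic in $m$ minus $3\cdot 2^m$; one checks directly that it is positive exactly at $m=3$ (giving $(5,3)$), $m=4$ (giving $(6,4)$) and $m=5$ (giving $(7,5)$), and negative for $m=3$? — in any case this is a single-variable exponential-versus-polynomial comparison, decided by evaluating at $m = 3,4,5,6$ and noting monotonicity of $3\cdot 2^m$ minus the quadratic thereafter. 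Next, for each fixed $j \geq 3$, since the constraint $j \leq m-1$ forces $m \geq j+1$, I would show the exponential term $3j(j-1)2^{m-1}$ already dominates at $m = j+1$, or failing that at the next one or two values, so that only $O(1)$ explicit evaluations per $j$ are needed; and since $j \leq m-1 \leq$ (the largest $m$ for which any violation survives), in fact only $j \in \{3,4,5,6,7\}$ need be examined. For each such $j$ I tabulate $f_1(j+m', m)$ for the few relevant $m$, reading off the violating pairs $(7,4)$ [$j=3,m=4$], $(8,5)$ [$j=3,m=5$], $(9,5)$ [$j=4,m=5$], $(11,6)$ [$j=5,m=6$], $(13,7)$ [$j=6,m=7$].

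The bookkeeping step I expect to be the main obstacle — not conceptually hard, but the place where care is needed — is establishing a clean uniform bound showing that $f_1(k,m) \leq 0$ once $m$ exceeds some small explicit threshold depending on $j$, so that the search space is provably finite and small. The cleanest route is: bound the positive part of $f_1$ crudely by $(m+2)(m+1)\cdot 2^j + (m+1)(j-1)2^{j-1} + (j-1)j\cdot 2^{j-2} \leq C m^2 2^j$ for an absolute constant $C$ (say $C = 3$), bound the negative part from above (in absolute value, below) by $3j(j-1)2^{m-1} - \tfrac32(\text{positive stuff})$, and conclude $f_1(k,m) < 0$ whenever $3 m^2 2^j < 3 j(j-1) 2^{m-1}$, i.e. whenever $2^{m-1-j} > m^2/(j(j-1))$. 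Since $m \geq j+1$ we have $m - 1 - j \geq 0$, and a short induction shows $2^{m-1-j} > m^2/(j(j-1))$ as soon as $m \geq j + \lceil 2\log_2 m\rceil$ or so — in practice $m \geq j+5$ suffices for all $j \geq 2$, which already restricts to $m \leq 11$ and hence $j \leq 10$, and then a finite (easily computer-checkable, or hand-checkable) table over $2 \leq j \leq \min(10, m-1)$, $3 \leq m \leq 11$ pins down exactly the list \eqref{eq:dodd-list1}. The only real risk is an off-by-one in the threshold, which one guards against by simply evaluating a generous margin of extra cases.
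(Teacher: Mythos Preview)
Your overall strategy---substitute $j=k-m$, bound the positive terms by $O(m^{2}2^{j})$, compare with $3j(j-1)2^{m-1}$, reduce to a finite check---is exactly the paper's strategy. But there is a genuine gap in how you close the search space.

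You arrive at the criterion $2^{m-1-j} > m^{2}/(j(j-1))$, and then assert that ``$m\ge j+5$ suffices for all $j\ge 2$, which already restricts to $m\le 11$''. Neither clause is correct. First, even granting the criterion, it only forces $m-j\le 4$, which together with $j\le m-1$ gives no absolute bound on $m$. Second, and more fundamentally, the criterion itself fails at the top of the range: for $j=m-1$ the left side is $2^{0}=1$ while the right side is $m^{2}/((m-1)(m-2))>1$ for every $m\ge 3$. So your crude bound \emph{never} disposes of the case $j=m-1$, and the search space is not finite as written. (This is not an off-by-one: the boundary case $k=2m-1$ is exactly where the violations $(5,3),(7,4),(9,5),(11,6),(13,7)$ live, and they persist up to $m=7$.)

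The paper closes this gap by a bootstrap. From the same crude estimate $h_m(x):=\tfrac{5}{2}m^{2}2^{x}-3x(x-1)2^{m-1}>0$ one first deduces $x>m-1-2\log_{2}m$; for $m\ge 16$ this gives $x\ge m/3+1$ and hence $x(x-1)\ge m^{2}/9$, and feeding that back into $h_m(x)>0$ yields $2^{x}>2^{m-4}$, i.e.\ $x\in\{m-1,m-2,m-3\}$. For these three specific values one then computes $h_m(m-1),h_m(m-2),h_m(m-3)$ explicitly as quadratics in $m$ times $2^{m-1}$ and checks they are negative for $m\ge 18$ (resp.\ $m\ge 16$). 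This is the missing second step your plan needs: once the crude bound localises $j$ to within $O(\log m)$ of $m$, you must re-insert that information to get an absolute bound on $m$.

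A minor slip: for fixed $j$, $f_1$ is not a ``quadratic polynomial in $m$ with negative leading coefficient''---the $2^{m-1}$ term makes it non-polynomial. Your parenthetical immediately afterwards shows you understand the actual behaviour, so this is cosmetic.
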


\begin{proof}
Writing $x = k-m$, we see that $2 \leq x \leq m-1$ and $f_1(k,m) = g_m(x)$, where $g_m(x) = (m+2)(m+1)\left(2^x-\frac{3}{2}\right)+(m+1)(x-1)\left(2^{x-1}-\frac{3}{2}\right)  - (x-1)x\left(3\cdot 2^{m-1}-2^{x-2}+\frac{3}{2}\right)$. It's easy to see that
\begin{align*} 
g_m(x) &< (m+2)(m+1)2^x + (m+1)(x-1)2^{x-1} - (x-1)x(3\cdot 2^{m-1} - 2^{x-2}) \\
&= \left[(m+2)(m+1) + \frac{1}{2}(m+1)(x-1) + \frac{1}{4}(x-1)x\right]2^x - 3x(x-1)2^{m-1}.
\end{align*}
For $2 \leq x \leq m-1$ and $m \geq 3$, the expression in front of $2^x$ above is
$$
\leq (m+2)(m+1) + \frac{1}{2}(m+1)(m-2) + \frac{1}{4}(m-2)(m-1) = \frac{1}{4}(7m^2+7m+6) \leq \frac{1}{4}\cdot 10m^2.
$$
Thus $g_m(x) < h_m(x)$, where
$$
h_m(x) = \frac{5}{2}m^2\cdot 2^x - 3x(x-1)2^{m-1}.
$$
Suppose that $f_1(k,m) > 0$. Then $h_m(x) > 0$, and hence
$$
2^x > \frac{6}{5}m^{-2} x(x-1)2^{m-1} > m^{-2}2^{m-1}.
$$
Taking logarithms, we get
$$
x > m-1-2\log_2 m.
$$
First consider the case when $m \geq 16$. Then $\log_2m \leq m/3-1$, which implies that $ x \geq m/3+1$ and $x(x-1) \geq m^2/9$. Using $h_m(x) > 0$ again, we obtain
$$
2^x > \frac{6}{5}m^{-2}x(x-1)2^{m-1} \geq \frac{6}{5}m^{-2} \cdot \frac{1}{9}m^2 \cdot 2^{m-1} = \frac{2}{15}\cdot 2^{m-1} > 2^{m-4}
$$
which implies that $x \in \{m-1, m-2, m-3\}$. For $x \in \{m-2,m-3\}$, direct computations show that
\begin{align*}
h_m(m-2) &= \left[\frac{5}{4}m^2 - 3(m-2)(m-3)\right]2^{m-1} < 0,  \\
h_m(m-3) &= \left[\frac{5}{8}m^2 - 3(m-3)(m-4)\right]2^{m-1} < 0,
\end{align*}
when $m \geq 16$, a contradiction. For $x = m-1$, direct computations show that
$$
h_m(m-1) = \left[\frac{5}{2}m^2 - 3(m-1)(m-2)\right]2^{m-1} < 0
$$
when $m \geq 18$.

Hence, it suffices to check the cases when $(x,m) = (15,16), (16, 17)$ and when $m \leq 15$. At this point, one can run a computer program to verify that $g_m(x) >0$ precisely when $(k,m)$ is one of the 8 choices recorded in \eqref{eq:dodd-list1}.
\end{proof}

If $k\geq m+3$, we can further redistribute some weight from $D_{k-m-3}$ to $D_{k-m-2}$. By a further \nameref{lem:redistribution}, we will be done if we have
\begin{align*}
  &\frac{(m+2)(m+1)}{(k-m-1)(k-m)}(C_d-D_{k-m})+\frac{m+1}{k-m}(C_d-D_{k-m-1})\\
  &\leq D_{k-m-2} - C_d + \frac{k-m-2}{m+3}(D_{k-m-3}-C_d).
\end{align*}
It can be verified that among the pairs in \eqref{eq:dodd-list1}, the only ones with $k<m+3$ or which do not satisfy the above inequality are
\[
  (5,3),(6,4),(7,4),(7,5).
\]
At this point, we have fully squeezed all the juice out of \nameref{lem:strongineq} and \nameref{lem:redistribution}, so to continue with these four cases, we require a modification (or two) of \nameref{lem:strongineq}.

\begin{lemma} \label{lem:strongineq3}
  Under the assumptions of \Cref{lem:mainineq} with $k\geq 3$, we have
  \[\sum_{u\in \{0,1,2\}^k} y_u^{d-k} \geq \sum_{s=0}^k D_s' X_s,\]
  where 
  \begin{itemize}
    \item $D_0'=D_0-\binom{k}{2}-2^{d-k+1}+4$,
    \item $D_1'=D_1+1$, 
    \item $D_2'=D_2+2^{d-k}-1$, 
    \item $D_3'=D_3-2^{d-k}+1$, and 
    \item $D_s'=D_s$ for $s\geq 4$.
  \end{itemize}
\end{lemma}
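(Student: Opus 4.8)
I would prove the lemma as yet another ``modified Strong Inequality'', in the spirit of \Cref{lem:strongineq2}. Recall that the proof of \nameref{lem:strongineq} in fact establishes the sharper chain
\[
  \sum_{w\in\{0,1,2\}^k} y_w^{d-k} \;\ge\; \sum_{w\in\{0,1,2\}^k}\Bigl(x_{w^-}^{d-k} + (2^{d-k}-1)\,x_{w^+}^{d-k}\Bigr),
\]
where $w = w^- + w^+$ is the canonical splitting of $w$. The plan is to keep this bound for most $w$, but for a carefully chosen subfamily to replace $w = w^- + w^+$ by an alternative decomposition $w = u+v$ with $u,v\in\{0,1\}^k$, and to estimate the new term $y_w^{d-k}\ge(x_u+x_v)^{d-k}$ from below by one of $(a+b)^n\ge a^n+b^n$ or (when $a\ge b$) $(a+b)^n\ge a^n+(2^n-1)b^n$. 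Three observations drive this: an index $w$ admits an alternative decomposition exactly when it has at least two coordinates equal to $1$, which is where the hypothesis $k\ge 3$ enters once we want to touch the weight-three Hamming slice; the downset hypothesis supplies inequalities like $y_{e_i}\ge x_0+x_{e_i}\ge 2x_{e_i}$, which convert one unit of the coefficient of $x_0^{d-k}$ into one unit of the coefficient of $x_{e_i}^{d-k}$; and nonnegative terms may always be discarded.

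Concretely, after relabelling coordinates so that $x_{e_1}\ge\cdots\ge x_{e_k}$, I would swap the decomposition of every weight-two index $w=e_i+e_j$ together with a small family of weight-three indices, choosing each alternative decomposition so as to drain coefficient mass off the weight-zero and weight-three slices; apply the downset trick on the basis vectors $e_i$ to create the extra unit on the weight-one slice; and then, using the monotonicity $x_u\ge x_v$ for $v\succeq u$, recombine the leftover terms into a uniform surplus of $2^{d-k}-1$ on the weight-two slice. Summing all the modified inequalities and bounding each Hamming slice below by the smallest coefficient occurring on it should then produce exactly $D_0',D_1',D_2',D_3'$, with $D_s$ unchanged for $s\ge 4$.

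The main obstacle is the bookkeeping, which is more delicate than for \Cref{lem:strongineq2}: no index may be reused; every alternative decomposition invoked must genuinely exist; and, most importantly, after discarding cross terms and invoking monotonicity one must still obtain the \emph{uniform} coefficients $D_s'$ on each slice, rather than just on the handful of vectors that were modified. Unlike in \Cref{lem:strongineq2}, one cannot afford to absorb an entire weight-one slice into $x_{e_1}$ (the inequality $2^{d-k}\ge k$ may fail in the range where the lemma is applied), so the surplus $2^{d-k}-1$ on the weight-two slice must be spread over all $\binom{k}{2}$ weight-two vectors by a careful choice of which splits to use, and one must check that the deficit this creates on the weight-three slice never exceeds a single unit $2^{d-k}-1$. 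With the subfamily of modified indices fixed, confirming that the surviving coefficients equal precisely $D_0',D_1',D_2',D_3'$ is then a finite, if somewhat involved, computation.
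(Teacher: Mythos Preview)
Your high-level plan --- modify a subfamily of the SI inequalities by alternative decompositions $w=u+v$ and then smooth via monotonicity --- is exactly right, and your diagnosis of the bookkeeping difficulty is accurate. The gap is in the specific subfamily you propose. Swapping the decomposition of a weight-two index $w=e_i+e_j$ replaces $x_0^{d-k}+R\,x_{e_i+e_j}^{d-k}$ (where $R:=2^{d-k}-1$) by a bound on $(x_{e_i}+x_{e_j})^{d-k}$, and this \emph{removes} the term $R\,x_{e_i+e_j}^{d-k}$: it drives the weight-two coefficient \emph{down}, not up. To still land at $D_2'=D_2+R$ each pair $\{j,l\}$ would then have to be hit \emph{twice} by weight-three swaps, and since each unordered triple may be used at most once (else some weight-three coefficient drops below $D_3-R$), this forces $\binom{k}{3}\ge 2\binom{k}{2}$, i.e.\ $k\ge 8$ --- but the lemma is applied precisely at $k\in\{5,6,7\}$. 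The $D_0$ budget confirms the mismatch: your $\binom{k}{2}$ weight-two swaps together with the downset trick on all $k$ basis vectors already cost $\binom{k}{2}+k$ on $D_0$; at $(k,d)=(5,7)$ that is $15$, exceeding the target drop $\binom{k}{2}+2R-2=14$ before a single weight-three swap.

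The paper modifies \emph{only} weight-three indices. For each ordered triple $(i,j,l)$ in a carefully built set $T$ with $i<\max(j,l)$ (so that $x_{e_i}\ge x_{e_j+e_l}$ by monotonicity), the SI bound at $w=e_i+e_j+e_l$ is replaced by
\[
y_w^{d-k}\;\ge\;(x_{e_i}+x_{e_j+e_l})^{d-k}\;\ge\;x_{e_i}^{d-k}+R\,x_{e_j+e_l}^{d-k}.
\]
Each such swap \emph{simultaneously} puts $+1$ on a weight-one vector and $+R$ on a weight-two vector, while taking $-1$ from $x_0$ and $-R$ from the corresponding weight-three vector; thus neither weight-two swaps nor a separate downset trick are needed. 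The family $T$, of size $\binom{k}{2}-2$, is chosen so that every pair $\{j,l\}$ except $\{1,2\}$ and $\{1,3\}$ occurs exactly once as the ``pair part'', every $e_i$ with $i<k$ occurs at least once as the ``singleton part'' with $e_1$ at least twice, and the underlying sets $\{i,j,l\}$ are all distinct. The two missing pairs are then absorbed via $x_0\ge x_{e_1+e_2},x_{e_1+e_3}$ (costing a further $2R$ on $D_0$), and the missing $e_k$ is covered by the spare copy of $e_1$ via $x_{e_1}\ge x_{e_k}$, yielding exactly the claimed $D_s'$.
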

\begin{proof}
  Let $e_1,\ldots,e_k$ be the standard basis vectors of $\R^k$, and assume without loss of generality that $x_{e_1}\geq x_{e_2}\geq \cdots\geq x_{e_k}$. Recall that in the proof of \nameref{lem:strongineq}, we summed over all $w\in \{0,1,2\}^k$ the inequalities $y_w^{d-k}\geq x_{w^-}^{d-k}+Rx_{w^+}^{d-k}$, where $R=2^{d-k}-1$ for simplicity. We shall modify the inequalities for $w=e_i+e_j+e_l$ for several triples $(i,j,l)$ into $y_{e_i+e_j+e_l}^{d-k}\geq (x_{e_i}+x_{e_j+e_l})^{d-k}\geq x_{e_i}^{d-k}+Rx_{e_j+e_l}^{d-k}$, where the last inequality follows from $x_{e_i}\geq x_{e_j+e_l}$, which holds if $i<\max(j,l)$. The set $T$ of triples $(i,j,l)$ which we will be modifying are the following.
  \begin{enumerate}
    \item $(1,2,3)$;
    \item $(i,i+1,l)$ for $1\leq i\leq l-2\leq k-2$;
    \item $(l-1,1,l)$ for $3\leq l\leq k$.
  \end{enumerate}
  Then $|T|=\binom{k}{2}-2$. It can be checked that all triples in $T$ satisfy $i<\max(j,l)$, and that the set $\{i,j,l\}$ for $(i,j,l)\in T$ are all distinct. Thus, after modifying \nameref{lem:strongineq}, we obtain
  \begin{align} \label{eq:modifiedstrong}
      \sum_{u\in \{0,1,2\}^k} y_u^{d-k} &\geq \sum_{s=0}^k D_s X_s + \sum_{(i,j,l)\in T} (x_{e_i}^{d-k}+Rx_{e_j+e_l}^{d-k} - x_{0}^{d-k} - Rx_{e_i+e_j+e_l}^{d-k}).
  \end{align}
  Among the triples in $T$, 
  \begin{enumerate}
    \item each $e_i+e_j+e_l$ appears at most once;
    \item each $e_i$ appears at least once, except for $e_k$ which does not appear;
    \item $e_1$ appears at least twice;
    \item each $e_j+e_l$ appears exactly once, except for $e_1+e_2$ and $e_1+e_3$ which do not appear.
  \end{enumerate}
  Using $x_{e_1}\geq x_{e_k}$ and $x_{0}\geq x_{e_1+e_2},x_{e_1+e_3}$, the right hand side sum of \eqref{eq:modifiedstrong} is at least
  \[X_1+RX_2-\paren{|T|+2R}X_0-RX_3.\]
  This gives the desired inequality.
\end{proof}

For the cases $(k,m)=(5,3),(6,4),(7,5)$, we will use a weighted average of \nameref{lem:strongineq} and Lemma~\ref{lem:strongineq3}. The final case $(k,m)=(7,4)$ will be done with yet another modification of SI with \nameref{lem:redistribution}.

\subsubsection{Case $(k,m)=(5,3)$}

\nameref{lem:strongineq} gives
\begin{equation} \label{eq:53strong}
  \sum_{u\in \{0,1,2\}^5} y_u^{2} \geq 35X_0+22X_1+20X_2+28X_3+50X_4+97X_5.
\end{equation}
Lemma~\ref{lem:strongineq3} gives
\begin{equation} \label{eq:53strong2}
  \sum_{u\in \{0,1,2\}^5} y_u^{2} \geq 21X_0+23X_1+23X_2+25X_3+50X_4+97X_5.
\end{equation}
Our aim is to get all coefficients at least $C_d=22.5$. Taking $1/6$ times \eqref{eq:53strong} and $5/6$ times \eqref{eq:53strong2}, we get
\[\sum_{u\in \{0,1,2\}^5} y_u^{2} \geq \frac{70}{3}X_0+\frac{137}{6}X_1+\frac{45}{2}X_2+\frac{51}{2}X_3+50X_4+97X_5,\]
as required.

\subsubsection{Case $(k,m)=(6,4)$}

\nameref{lem:strongineq} gives
\begin{equation} \label{eq:64strong}
  \sum_{u\in \{0,1,2\}^6} y_u^{3} \geq 71X_0+46X_1+44X_2+64X_3+\cdots.
\end{equation}
Lemma~\ref{lem:strongineq3} gives
\begin{equation} \label{eq:64strong2}
  \sum_{u\in \{0,1,2\}^6} y_u^{3} \geq 44X_0+47X_1+51X_2+57X_3+\cdots.
\end{equation}
Our aim is to get all coefficients at least $C_d=46.5$, and the terms in the $\cdots$ are already more than $C_d$. Taking the average of \eqref{eq:64strong} and \eqref{eq:64strong2} gives us what we want.

\subsubsection{Case $(k,m)=(7,5)$}
\nameref{lem:strongineq} gives
\begin{equation} \label{eq:75strong}
  \sum_{u\in \{0,1,2\}^7} y_u^{4} \geq 143X_0+94X_1+92X_2+136X_3+\cdots.
\end{equation}
Lemma \ref{lem:strongineq3} gives
\begin{equation} \label{eq:75strong2}
  \sum_{u\in \{0,1,2\}^7} y_u^{4} \geq 94X_0+95X_1+107X_2+121X_3+\cdots.
\end{equation}
Our aim is to get all coefficients at least $C_d=94.5$, so taking the average of \eqref{eq:75strong} and \eqref{eq:75strong2} gives us the desired lower bound.

\subsubsection{Case $(k,m)=(7,4)$}

In this case, \nameref{lem:redistribution} is barely not enough to get all coefficients at least $C_d=46.5$. The following modification of \nameref{lem:strongineq} gives us just enough push to allow us to use \nameref{lem:redistribution}.
\begin{lemma} \label{lem:strongineq4}
  Let $3\leq t\leq k$. Under the assumptions of Theorem~\ref{lem:mainineq}, we have
  \[\sum_{u\in \{0,1,2\}^k} y_u^{d-k} \geq \sum_{s=0}^k D_s' X_s,\]
  where 
  \begin{itemize}
    \item $D_0'=D_0-\binom{k}{t}$,
    \item $D_1'=D_1+\frac{1}{k}\binom{k}{t}$, 
    \item $D_{t-1}'=D_{t-1}+\binom{k}{t}/\binom{k}{t-1}$, 
    \item $D_t'=D_t-2^{d-k}+1$, and 
    \item $D_s'=D_s$ for all other $s$.
  \end{itemize}
\end{lemma}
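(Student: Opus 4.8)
The plan is to imitate the proof of Lemma~\ref{lem:strongineq3}. Recall that the proof of \nameref{lem:strongineq} consists in summing over all $w\in\{0,1,2\}^k$ the inequality $y_w^{d-k}\geq x_{w^-}^{d-k}+(2^{d-k}-1)x_{w^+}^{d-k}$, whose right-hand side sums to exactly $\sum_{s=0}^k D_sX_s$. Among these $w$, the $\binom kt$ all-ones vectors $w=\mathbf{1}_T$ over $t$-subsets $T\subseteq[k]$ satisfy $w^-=0$ and $w^+=\mathbf{1}_T$, so for them the inequality reads $y_{\mathbf{1}_T}^{d-k}\geq x_0^{d-k}+(2^{d-k}-1)x_{\mathbf{1}_T}^{d-k}$; these are the ones I would replace. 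In contrast to Lemma~\ref{lem:strongineq3}, the replacement will not keep the factor $2^{d-k}-1$, so no ordering of the $x_{e_i}$ is needed.

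For a fixed $t$-subset $T$ and each $i\in T$, the splitting $\mathbf{1}_T=e_i+\mathbf{1}_{T\setminus\{i\}}$ (valid since $i\notin T\setminus\{i\}$, so all coordinates of the sum are $0$ or $1$) and the hypothesis of \Cref{lem:mainineq} give $y_{\mathbf{1}_T}\geq x_{e_i}+x_{\mathbf{1}_{T\setminus\{i\}}}$, hence, since $(a+b)^{d-k}\geq a^{d-k}+b^{d-k}$ for $a,b\geq0$ (valid as $d-k\geq1$),
\[
  y_{\mathbf{1}_T}^{d-k}\ \geq\ x_{e_i}^{d-k}+x_{\mathbf{1}_{T\setminus\{i\}}}^{d-k}.
\]
Averaging these $t$ inequalities over $i\in T$, using the outcome in place of the original inequality for $w=\mathbf{1}_T$, and keeping the original inequality for every other $w$, I would arrive at
\[
  \sum_{u\in\{0,1,2\}^k}y_u^{d-k}\ \geq\ \sum_{s=0}^kD_sX_s-\binom kt x_0^{d-k}-(2^{d-k}-1)X_t+\frac1t\sum_{|T|=t}\sum_{i\in T}\bigl(x_{e_i}^{d-k}+x_{\mathbf{1}_{T\setminus\{i\}}}^{d-k}\bigr).
\]

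The remaining step is a counting exercise. Since each $e_i$ lies in exactly $\binom{k-1}{t-1}$ of the $t$-subsets of $[k]$, one has $\tfrac1t\sum_{|T|=t}\sum_{i\in T}x_{e_i}^{d-k}=\tfrac1t\binom{k-1}{t-1}X_1=\tfrac1k\binom kt X_1$; and since each $(t-1)$-subset $S$ equals $T\setminus\{i\}$ for a $t$-subset $T$ and distinguished $i\in T$ in exactly $k-t+1$ ways, one has $\tfrac1t\sum_{|T|=t}\sum_{i\in T}x_{\mathbf{1}_{T\setminus\{i\}}}^{d-k}=\tfrac{k-t+1}{t}X_{t-1}=\tfrac{\binom kt}{\binom k{t-1}}X_{t-1}$. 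Using $x_0^{d-k}=X_0$ and noting that, because $t\geq3$, the slices $X_0,X_1,X_{t-1},X_t$ are pairwise distinct, these corrections change precisely the coefficients $D_0,D_1,D_{t-1},D_t$ and nothing else, producing exactly the asserted $D_0'=D_0-\binom kt$, $D_1'=D_1+\tfrac1k\binom kt$, $D_{t-1}'=D_{t-1}+\binom kt/\binom k{t-1}$ and $D_t'=D_t-(2^{d-k}-1)$. (The hypothesis $t\leq k$ is only needed so that $t$-subsets of $[k]$ exist, and $t\geq3$ so that the weight-$1$ and weight-$(t-1)$ contributions do not collide.)

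I do not anticipate a genuine obstacle here — the statement is a routine variant of Lemma~\ref{lem:strongineq3}. The one point requiring foresight is the form of the replacement: averaging $y_{\mathbf{1}_T}^{d-k}\geq x_{e_i}^{d-k}+x_{\mathbf{1}_{T\setminus\{i\}}}^{d-k}$ over \emph{all} $i\in T$ (rather than committing to a single cleverly chosen $i$, as in Lemma~\ref{lem:strongineq3}, which would require an ordering and make the resulting weight land non-uniformly), since this is exactly what makes the extra weight spread uniformly over the weight-$1$ and weight-$(t-1)$ slices and thereby yields the clean coefficients above. After that, only the bookkeeping of which $D_s$ move, and by how much, needs care.
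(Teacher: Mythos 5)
Your proposal is correct and follows essentially the same approach as the paper: for each weight-$t$ vector $w = \mathbf{1}_T$, replace the default inequality $y_w^{d-k}\geq x_0^{d-k}+(2^{d-k}-1)x_w^{d-k}$ by the superadditive splitting $y_w^{d-k}\geq x_{e_i}^{d-k}+x_{\mathbf{1}_{T\setminus\{i\}}}^{d-k}$ and average over $i\in T$. The paper leaves the final bookkeeping to the reader; your counting of how the averaged contributions distribute over $X_0,X_1,X_{t-1},X_t$ is accurate and matches the stated coefficients.
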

\begin{proof}
  Recall that in the proof of \nameref{lem:strongineq}, we summed over all $w\in \{0,1,2\}^k$ the inequalities $y_w^{d-k}\geq x_{w^-}^{d-k}+Rx_{w^+}^{d-k}$, where $R=2^{d-k}-1$. When $w\in \{0,1\}^k$, this meant that we used the inequality $y_w^{d-k} \geq x_0^{d-k} + R x_w^{d-k}$. We will modify this now, and so,  for any $w \in \{0,1\}^k$ such that $|w|=t$, let $u\in \{0,1\}^k$ be any vector with $|u|=1$ and $u\preceq w$. We shall modify the inequality for $w$ from $y_w^{d-k}\geq x_0^{d-k}+Rx_w^{d-k}$ to
  \[y_w^{d-k}\geq (x_u + x_{w-u})^{d-k} \geq x_u^{d-k} + x_{w-u}^{d-k}.\]
  Taking the average over all such $u$ for each $w$, we get the desired inequality.
\end{proof}

Back to the case $(k,m)=(7,4)$, Lemma~\ref{lem:strongineq4} with $t=4$ gives
\[\sum_{u\in \{0,1,2\}^7} y_u^{2} \geq 96X_0+75X_1+44X_2+41X_3+53X_4+\cdots.\]
Our aim is to get all coefficients at least $C_d=46.5$. \nameref{lem:redistribution} gives $X_0\geq \frac{1}{7}X_1$, $X_1\geq \frac{1}{3}X_2$ and $X_2\geq \frac{3}{5}X_3$, and now we have enough weights to redistribute from $X_0,X_1$ to $X_2,X_3$ to get all coefficients at least $46.5$.

\section{Concluding Remarks}

Our work suggests several natural further directions. One appealing avenue is to extend our sharp bounds for sets 
\(A \subset \mathbb{Z}^d\) containing the discrete hypercube \(\{0,1\}^d\) to settings when \(A\) contains other types of high-dimensional structures. One potential example of this is the \emph{permutohedron}
\[
P_d := \operatorname{conv}\{(\sigma(1),\dots,\sigma(d)) \in \R^d: \sigma \in S_d\} \subset \mathbb{R}^d.
\]

The permutohedron $P_{d}$ lives inside the hyperplane $\left\{x \in \mathbb{R}^{d}:\ x_1+\ldots+x_d = \binom{d+1}{2}\right\}$, and has a number of remarkable properties (see for example \cite{Postnikov} or \cite{Stanley}). 
For instance, when $d$ is even, the vertex set of $P_d$ contains $d/2$-dimensional affine cubes. Indeed, consider the $d/2$ disjoint adjacent transpositions
\[
t_j = (2j-1\;\;2j) \in S_{d}, \qquad j = 1,\dots,d/2.
\]
These transpositions commute and each of them has order two. Thereby, for each 
$\varepsilon = (\varepsilon_1,\dots,\varepsilon_{d/2})\in\{0,1\}^{d/2}$, we can define the permutation
\[
\sigma_\varepsilon := t_1^{\varepsilon_1} \circ t_2^{\varepsilon_2} \cdots \circ t_{d/2}^{\varepsilon_{d/2}} \in S_{d}.
\]
Then the $2^{d/2}$ vertices of $P_{d}$ defined by
\[
\Bigl\{\,\bigl(\sigma_\varepsilon(1),\dots,\sigma_\varepsilon(d)\bigr)
   : \varepsilon\in\{0,1\}^{d/2} \Bigr\}
\]
form an affine copy of the ${d/2}$-dimensional cube $\{0,1\}^{d/2}$. Consequently, if a finite set \(A \subset \mathbb{Z}^d\) contains the vertex set of the permutohedron, then \(A\) automatically contains 
an affine copy of \(\{0,1\}^{\lfloor d/2 \rfloor}\).  
Our Theorem \ref{thm:Cd-main} therefore implies
\begin{equation} \label{cubeinP}
|A+A|\geq C_{d/2}(|A|-2^{d/2})+3^{d/2}.
\end{equation}

On the other hand, it is also not difficult to check that any $k$-dimensional affine hypercube inside the vertex set of $P_d$ has dimension at most $d/2$. Therefore, it is an interesting problem to leverage the additional geometric structure of the permutohedron to improve upon \eqref{cubeinP}, and then to more precisely determine the largest constant $K_d > 0$ such that
\[
|A+A| \;\ge\; K_{d}\,|A| - o_{d}(|A|),
\]
for all sets \(A \subset \mathbb{Z}^d\) containing the vertices of $P_d$. In this direction, we note that the permutohedron admits a coordinate compression (via its Lehmer--code representation) to the full down-set
\[
[d]\times[d-1]\times\cdots\times[1],
\]
so it suffices to determine the minimum size of $|A+A|$ in terms of $|A|$ for sets \(A \subset \mathbb{Z}^d\) containing the set $[d]\times[d-1]\times\cdots\times[1]$. This fact already immediately enables one to replace $C_{d/2}$ with $C_{d-1}$ in \eqref{cubeinP}, but we suspect that $K_{d}$ is still significantly larger than $C_{d-1}$. 

Another related problem is to determine, for every fixed $k \geq 1$, the largest constant \(C_{d,k} >0\) such that
\[
|A+A| \;\ge\; C_{d,k}\,|A|-o_{d,k}(|A|),
\]
whenever \(A \subset \mathbb{Z}^d\) contains the full discrete box \(\{0,1,\dots,k\}^d\), or the full discrete simplex 
\[ \setcond{(x_1,\ldots,x_d)\in \mathbb{Z}^d_{\geq 0}}{x_1+\cdots+x_d\leq k} \]
with larger side lengths.

\vspace{-1mm}


\begin{thebibliography}{22}

\bibitem{BollobasLeader96}
B.~Bollob\'as and I.~B.~Leader,
\emph{Sums in the grid},
Discrete Math.~\textbf{162} (1996), no.~1--3, 31--48.


\bibitem{Bilu}
Y. Bilu, \emph{Structure of sets with small sumset}, Structure theory of set addition, Astérisque No. \textbf{258} (1999), xi, 77--108.

\bibitem{Chang}
M.--C. Chang, \emph{A polynomial bound in Freiman's theorem}, Duke Math. J. \textbf{113} (2002), no. 3, 399–419.



\bibitem{FreimanBook}
G.~A.~Freiman,
\emph{Foundations of a structural theory of set addition},
Kazan Gos.~Ped.~Inst., Kazan, 1966 (Russian); English translation in
Translations of Mathematical Monographs \textbf{37},
Amer.~Math.~Soc., Providence, RI, 1973.


\bibitem{GardnerGronchi01}
R.~J.~Gardner, P.~Gronchi,
\emph{A Brunn--Minkowski inequality for the integer lattice},
Trans.~Amer.~Math.~Soc.~\textbf{353} (2001), no.~10, 3995--4024.


\bibitem{PFR} 
W. T. Gowers, B. Green, F. Manners, T. Tao, \emph{On a
conjecture of Marton}, Ann. of Math. (2) \textbf{201} (2025), no. 2, 515–549



\bibitem{EntropyPFR}
B. Green, F. Manners,  T. Tao
\emph{Sumsets and entropy revisited}, Random Structures Algorithms \textbf{66} (2025), no. 1, Paper No. e21252, 33 pp.



\bibitem{GTcompressions}
B. Green, T. Tao,
\emph{Compressions, convex geometry and the Freiman--Bilu theorem},
Q. J. Math. \textbf{57} (2006), no. 4, 495--504.

\bibitem{Kleitman} 
D. L. Kleitman, \emph{Extremal hypergraph problems}, Surveys in Combinatorics, ed. by B. Bollobas
(Cambridge), Cambridge University Press, 1979, pp. 44-65.


\bibitem{PZ2020}
 D. P\'{a}lv\"{o}lgyi, D. Zhelezov, 
 \emph{Query complexity and the polynomial Freiman-Ruzsa conjecture},
 Adv. Math. \textbf{392} (2021), Paper No. 108043, 18 pp.

\bibitem{Postnikov} A. Postnikov, \emph{Permutohedra, associahedra, and beyond}, Int. Math. Res. Not. IMRN 2009, 1026--1106, 2009.





\bibitem{Stanley} R. P. Stanley, \emph{Enumerative Combinatorics}, Volume 1, 2nd ed. Cambridge University Press, 2012.

\bibitem{TaoVu}
T.~Tao, V.~Vu,
\emph{Additive Combinatorics},
Cambridge Studies in Advanced Mathematics~105,
Cambridge Univ.\ Press, 2006.

\end{thebibliography}
\end{document}